\newcommand{\ff}{\mathds{F}}
\newcommand{\zz}{\mathds{Z}}
\newcommand{\s}{\mathcal{S}}
\newcommand{\cc}{\mathcal{C}}
\newcommand{\jj}{\mathcal{J}}
\newcommand{\h}{\mathrm{h}}
\newcommand{\p}{\mathrm{p}}
\newcommand{\lcm}{\mathrm{lcm}}
\newcommand{\vv}{\mathbf{v}}
\newcommand{\uu}{\mathbf{u}}
\newcommand{\pp}{\mathfrak{p}}
\newcommand{\dd}{\mathfrak{D}}
\newtheorem{theorem}{Theorem}[section]   
\newtheorem{corollary}[theorem]{Corollary}     
\newtheorem{lemma}[theorem]{Lemma}         
\newtheorem{proposition}[theorem]{Proposition}  
\theoremstyle{definition}
\newtheorem{definition}[theorem]{Definition}   
\theoremstyle{remark}
\newtheorem{remark}[theorem]{Remark}        
\newtheorem{example}[theorem]{\emph{Example}}        
\numberwithin{equation}{section}     
\def\ds {\displaystyle}
\begin{document}

\title[The Dynamics of Conjunctive and Disjunctive Boolean Networks]{The Dynamics of Conjunctive and Disjunctive Boolean Networks}

\author[Jarrah, Laubenbacher]{Abdul Salam Jarrah, Reinhard Laubenbacher, Alan Veliz-Cuba}
\address[Abdul Salam Jarrah, Reinhard Laubenbacher, Alan Veliz-Cuba] {Virginia Bioinformatics Institute, Virginia Tech, Blacksburg, VA 24061-0477, USA}
\email{\{ajarrah,reinhard,alanavc\}@vbi.vt.edu}

\thanks{This work was supported partially by NSF Grant DMS-0511441.}
\date{\today}

\keywords{conjunctive, disjunctive, Boolean network, phase space, limit cycle, fixed points.}

\begin{abstract}
The relationship between the properties of a dynamical system and
the structure of its defining equations has long been studied in
many contexts. Here we study this problem for the class of
conjunctive (resp. disjunctive) Boolean networks, that is, Boolean networks in which
all Boolean functions are constructed with the AND (resp. OR) operator only.
The main results of this paper describe network
dynamics in terms of the structure of the network dependency graph (topology).
For a given such network, all possible limit cycle lengths are
computed and lower and upper bounds for the number of cycles of
each length are given.
In particular, the exact number of fixed points is obtained.
The bounds are in terms of structural
features of the dependency graph and its
partially ordered set of strongly connected components.
For
networks with strongly connected dependency graph, the exact cycle
structure is computed.
\end{abstract}

\maketitle

\section{Introduction}
The understanding of the relationship between structural features
of dynamical systems and the resulting dynamics is an important
problem that has been studied extensively in the dynamical systems literature.
For example,
work by Golubitsky and Stewart \cite{GS} about coupled cell dynamical
systems given by coupled systems of ODEs attempts to obtain
information about system dynamics from the topology of the graph
of couplings.
Albert and Othemer \cite{AO} used a Boolean network model to show
that the expression patterns of the segment polarity in \emph{Drosophila}
are determined by the topology of its gene regulatory network.
Thomas et al \cite{ThoThiKau}
conjectured that negative feedback loops are necessary for periodic
dynamics whereas positive feedback loops are necessary for
multistationarity. These conjectures have been the subject of many
published articles \cite{Soule,Plahte,Gouze}.
In \cite{SV-CLJ} we demonstrated
that networks with a large number of independent negative feedback loops
tend to have few limit cycles and these cycles are usually long.

In this paper we study the effect of the network topology on the
dynamics of a family of Boolean networks.
Boolean networks in general, and cellular automata in particular,
have long been used to model and simulate a wide range of
phenomena, from logic circuits in engineering and gene regulatory
networks in molecular biology \cite{kauffman69b,Kauff69,Kwon-Cho,DavBor,El, AO} to population dynamics
and the spread of epidemics \cite{Youssef,SKT}.  Especially for large networks,
e.g., many agent-based simulations, it becomes infeasible to
simulate the system extensively in order to obtain information about its
dynamic properties, even if such simulation is possible. In such
instances it becomes important from a practical point of view to
be able to derive information about network dynamics from
structural information. But the problem is of interest in its own
right, in particular in the more general context of time-discrete
dynamical systems over general finite fields. These have been
studied extensively, see, for example, \cite{Vivaldi:94a, BG,MM,
CLP, CJLS}. They have a wide range of applications in engineering
\cite{C,El, LBL, ML1, ML2, MW, WM} and recently in computational
biology \cite{LS,JLSS}.

Let $\ff_2:=\{0,1\}$ be the
Galois field with two elements. We view a Boolean network on $n$
variables as a dynamical system
$$
f = (f_1, \ldots , f_n): \ff_2^n\longrightarrow \ff_2^n.
$$
Here, each of the coordinate functions $f_i:\ff_2^n\longrightarrow
\ff_2$ is a Boolean function which can be written uniquely as a polynomial
where the exponent of each variable in each term is at most one \cite{LN}.
In particular $AND(a,b) = ab$ and $OR(a,b) =a+b+ab$.
We use polynomial forms of Boolean functions throughout this paper.

Two directed graphs are usually assigned
to each such system: The \emph{dependency graph} which encodes the static
relationships among the nodes of the network and the \emph{phase space} which
describe the dynamic behavior of the network.
In this paper we focus on the question of deriving information
about the phase space of a Boolean network from the structure of
its dependency graph. Next we define these graphs.

The \emph{dependency graph} $\dd(f)$
of $f$ has $n$ vertices corresponding to the
Boolean variables $x_1,\ldots ,x_n$ of $f$. There is a directed
edge $i \rightarrow j$ if $x_i$ appears in the function $f_j$.
That is, $\dd(f)$ encodes the variable dependencies in $f$. It is
similar to the coupling graph of Stewart and Golubitsky.

\begin{example}\label{ex:run}
The dependency graph of $f =
(x_2x_3,x_1,x_2,x_3x_4,x_1x_6,x_3x_4x_5)
: \ff_2^6 \longrightarrow \ff_2^6$
is the directed graph in Figure \ref{dep7}.
\end{example}
\begin{figure}[htbp]
\begin{center}
\includegraphics[width=1in]{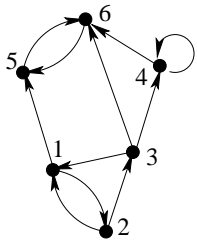}
\caption{The dependency graph of $f$ from Example \ref{ex:run}.}
\label{dep7}
\end{center}
\end{figure}

The dynamics of $f$ is encoded by its \emph{phase space}, denoted
by $\s(f)$. It is the directed graph with vertex set $\ff_2^n$ and
a directed edge from $\uu$ to ${\vv}$ if $f(\uu) = \vv$. For each
$\uu \in \ff_2^n$, the sequence $\{ \uu, f(\uu), f^2(\uu), \dots
\}$ is called the \emph{orbit} of $\uu$. If $\uu = f^t(\uu)$ and
$t$ is the smallest such number, the sequence $\{\uu, f(\uu),
f^2(\uu), \dots, f^{t-1}(\uu)\}$ is called a \emph{limit cycle} of
length $t$ denoted by $\cc_t$, and $\uu$ is called a
\emph{periodic} point of \emph{period} $t$. The point $\uu$ is
called a \emph{fixed point} if $f(\uu)=\uu$. If every limit cycle
is of length 1, the system $f$ is called a \emph{fixed-point}
system. Since $\ff_2^n$ is finite, every orbit must include a
limit cycle. We denote the cycle structure of $f$ in the form of
the generating function
\begin{equation}\label{cyc-struct}
\cc(f) = \sum_{i=1}^\infty C(f)_i \cc_i,
\end{equation}
where $C(f)_i$ denotes the number of cycles $\cc_i$ of length $i$
in the phase space of $f$. Since the phase space of $f$ is finite,
$C(f)_i=0$ for almost all $i$.

The height of $f$, denoted by $\h(f)$, is the least positive
integer $s$  such that $f^s(\uu)$ is a periodic point for all $\uu
\in \ff_2^n$. A \emph{component} of the phase space $\s(f)$
consists of a limit cycle and all orbits of $f$ that contain it.
Hence, the phase space is a disjoint union of components. We
define the \emph{period} of $f$, denoted by $\p(f)$, to be the
least common multiple of the lengths of all limit cycles in the
phase space of $f$.

\begin{example}\label{ex:general}
Let $f : \ff_2^3 \longrightarrow \ff_2^3$ be given by $f(x_1,x_2,x_3) =
(x_2x_3,x_1+x_3,x_1x_2)$. The phase space of $f$ has two components,
containing one fixed point and one limit cycle of length two,
that is $\cc(f) = \cc_1 + \cc_2$, see Figure \ref{fig1}.
It is clear from the phase space that $\h(f)= \p(f) = 2$.
The phase space in Figure \ref{fig1} was generated using DVD \cite{dvd}.
\end{example}
\begin{figure}[htbp]
\begin{center}
\includegraphics[width=2in]{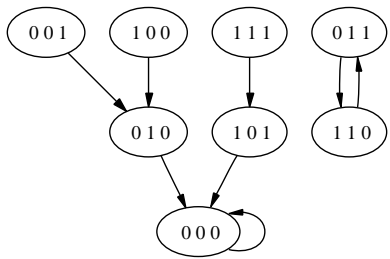}
\caption{The phase space $\s(f)$ of the system $f$ from Example \ref{ex:general}.}
\label{fig1}
\end{center}
\end{figure}

Without exhaustive iteration and just by analyzing $\dd(f)$ what can we say about $\s(f)$?
Namely, what is the period of $f$, the height of $f$, or the
generating function $\cc(f)$? This question is NP-hard in general,
so it is important to limit the class of Boolean networks
considered. Next we list some of the main known results.
\begin{itemize}
\item When all coordinate Boolean functions are the XOR function
(that is, the functions are linear polynomials),
the above questions have been answered completely. In fact the
questions have been answered for linear systems over any Galois field
\cite{El,C,He}. We have
developed and implemented algorithms that answer the questions above, see \cite{JLSV}.
\item For Boolean networks where all coordinate functions are symmetric threshold functions,
it has been shown that all cycles in the phase space are either fixed points
or of length two \cite{GO},
\item For Boolean cellular automata with the majority rule, the number of fixed points was
determined in \cite{AFK}, and
\item In \cite{ADG}, the authors studied
AND-OR networks (Boolean network where each local function is either an OR or AND function)
with directed dependency graphs. Formulae for the maximum number
of fixed points are obtained, and
\item The main result in  \cite{aracena_FB_BRN} is an upper bound for the number of
fixed points in Boolean regulatory networks.
\end{itemize}

In this paper we focus on the class of conjunctive  and disjunctive Boolean
networks, that is, Boolean networks where all of their coordinate functions are
either the AND function or the OR function.
The following represent
main previous attempts to mathematically analyze this class of Boolean networks.
\begin{itemize}
\item A family of this class of networks have been analyzed in \cite{golesHer}.
The authors
studied conjunctive Boolean networks (which they called OR-nets) and disjunctive
Boolean networks (AND-nets) on undirected dependency graph. That is, on graphs where
each edge is  bidirectional and
hence the dependency graph consists of cycles of length two. In particular, their result
that OR-nets have only fixed points and possibly a limit cycle of length two
\cite[Lemma 1]{golesHer} follows directly from our results as we explain in
Remark \ref{lit-rem}.
\item In \cite{barrett-chen}, the authors study a smaller family where each edge is
undirected and each node in the network has a self loop. In particular, they
showed that disjunctive Boolean networks (which they called OR-PDS) have only fixed points
as limit cycles \cite[Theorem 3.3]{barrett-chen}; this follows from our results as we show in
Remark \ref{lit-rem}.
\item Another family consists of the conjunctive Boolean cellular automata
and was analyzed in \cite{BG}. In particular, upper bounds for the
number of cycles are given. Here we study the whole class of
conjunctive Boolean networks and provide lower and upper bounds for the number of their
limit cycles. In particular, we present formulas for the exact number of
fixed points of any conjunctive or disjunctive boolean network, see Equation (\ref{fp-exact}).
\end{itemize}

In this paper we focus only on conjunctive Boolean networks, since
for any disjunctive Boolean network there exists a conjunctive Boolean network
that has exactly the same dynamics after relabeling the 0 and 1 and
hence we have the following theorem.
\begin{theorem}
Let $f=(f_1,\dots,f_n): \ff_2^n \longrightarrow \ff_2^n$ where
$f_i = x_{i_1}\vee \cdots \vee x_{i_{j_i}}$ be any disjunctive Boolean network.
Consider the conjunctive Boolean network $g=(g_1,\dots,g_n):\ff_2^n \longrightarrow \ff_2^n$,
where $g_i = x_{i_1}\wedge \cdots \wedge x_{i_{j_i}}$.
Then the two phase spaces $\s(f)$ and $\s(g)$ are isomorphic as directed graphs.
\end{theorem}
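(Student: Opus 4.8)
The plan is to prove the isomorphism by exhibiting an explicit bijection of the common vertex set $\ff_2^n$ that intertwines the two maps $f$ and $g$. The natural candidate is the coordinate-wise complement (negation) map
$$
\phi: \ff_2^n \longrightarrow \ff_2^n, \qquad \phi(\uu) = \uu + \mathbf{1},
$$
where $\mathbf{1} = (1,\dots,1)$, addition is over $\ff_2$, and in polynomial form the complement of a single bit is $\bar{x} = x+1$. Since complementing every coordinate twice returns the original point, $\phi$ is an involution, and in particular a bijection of the vertex set; this makes the inverse automatic and means no separate verification of bijectivity is needed.

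The heart of the argument is the identity $\phi \circ f = g \circ \phi$, which is exactly De Morgan's law read coordinate-wise. First I would verify De Morgan in polynomial form for two variables, $\overline{x \vee y} = \bar{x}\wedge\bar{y}$, i.e.\ $(x+y+xy)+1 = (x+1)(y+1)$, and then extend it to an arbitrary disjunction $x_{i_1}\vee\cdots\vee x_{i_{j_i}}$ by induction on $j_i$. Applying this to the $i$th coordinate gives $\overline{f_i(\uu)} = g_i(\overline{u_{i_1}},\dots,\overline{u_{i_{j_i}}})$, and since $g_i$ is built from exactly the same index set $\{i_1,\dots,i_{j_i}\}$ as $f_i$, the right-hand side is precisely the $i$th coordinate of $g(\phi(\uu))$. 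Collecting coordinates yields $\phi(f(\uu)) = g(\phi(\uu))$ for every $\uu \in \ff_2^n$.

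Finally I would translate this conjugacy into the claimed isomorphism of directed graphs. By definition $\uu \to \vv$ is an edge of $\s(f)$ if and only if $f(\uu) = \vv$; applying $\phi$ and using the intertwining relation, this holds if and only if $g(\phi(\uu)) = \phi(\vv)$, i.e.\ if and only if $\phi(\uu) \to \phi(\vv)$ is an edge of $\s(g)$. Thus $\phi$ carries edges to edges in both directions, so it is the desired graph isomorphism $\s(f) \cong \s(g)$, and in fact it matches limit cycles of $\s(f)$ with limit cycles of $\s(g)$ of the same length.

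I do not expect a genuine obstacle here. The only point requiring care is keeping the De Morgan computation honest in polynomial (rather than purely logical) form and noting the alignment of the complemented arguments with the shared index set of $f_i$ and $g_i$; once $\phi \circ f = g \circ \phi$ is in hand, the passage to a directed-graph isomorphism is immediate from the definition of the phase space.
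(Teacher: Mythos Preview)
Your proposal is correct and follows essentially the same approach as the paper: the paper defines the same coordinate-wise negation map $\neg = \phi$, asserts (via De Morgan) that $f = \neg \circ g \circ \neg$, and concludes that $\s(f)$ and $\s(g)$ are isomorphic. Your write-up is simply a more detailed version of the paper's terse argument, making explicit the De Morgan verification and the passage from conjugacy to graph isomorphism.
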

\begin{proof}
Let $\neg:\ff_2^n \longrightarrow \ff_2^n$ be defined by $\neg(x_1,\dots,x_n) = (1+x_1,\dots,1+x_n)$.
Then it is easy to see that $f(x_1,\dots,x_n) = (\neg \circ g \circ \neg)(x_1,\dots,x_n)$.
Thus $\s(f)$ and $\s(g)$ are isomorphic directed graphs.
\end{proof}
\begin{remark}
Let $G$ be a graph on  $n$ vertices such that the in-degree for
each vertex is non-zero ($G$ has no \emph{source vertex}). Then
there is one and only one conjunctive network $f$ on
$n$ nodes such that $\dd(f) = G$. Thus there is a one-to-one
correspondence between the set of all conjunctive Boolean networks
on $n$ nodes where none of the local functions is constant
and the set of directed graphs on $n$ vertices where none of the
vertices is a source.
\end{remark}

This correspondence was used in \cite{CLP} to find the period of a
given Boolean monomial system (conjunctive Boolean network)
and to decide when that system is a
fixed point system as we will recall in the next section. In this
paper we present upper and lower bounds on the number of cycles of
any length in the phase space of any conjunctive Boolean network.
Furthermore, we give upper bounds for the height.

In the next section, we recall some results from graph theory as
well as results about powers of positive matrices that we will use
to obtain upper bounds for the lengths of transients.

\section{The Relationship Between Dependency Graph and Dynamics}

Let $f: \ff_2^n \longrightarrow \ff_2^n$ be a conjunctive Boolean
network, $G = \dd(f)$ and $A$ the adjacency matrix of $G$. We
will assume here and in the remainder of the paper that none of
the Boolean coordinate functions of $f$ are constant, that is, all
vertices of $G$ have positive in-degree.

\subsection{The Adjacency Matrix} \label{adj-mat-sec}
Define the following
relation on the vertices of $G$: $a \sim b$ if and only if there is
a directed path from $a$ to $b$ and a directed path from $b$ to $a$.
It is easy to check that $\sim$ is an equivalence relation. Suppose
there are $t$ equivalence classes $V_1,\dots, V_t$. For each
equivalence class $V_i$, the subgraph $G_i$ with the vertex set
$V_i$ is called a \emph{strongly connected component} of $G$.
The graph $G$ is called \emph{strongly connected} if $G$ has a unique
strongly connected component.

There exists a
permutation matrix $P$ that permutes the rows and columns of $A$
such that
\begin{equation}\label{frob}
PAP^{-1} =
\begin{bmatrix}
  A_{1} & A_{12} & \cdots & A_{1t} \\
     0  & A_2 & \cdots & A_{2t} \\
 \vdots & \vdots & \ddots & \vdots \\
    0   & 0      & \cdots & A_{t} \\
\end{bmatrix}
\end{equation}
where $A_i$ is the adjacency matrix of the component $G_i$, and
$A_{ij}$ represents the edges from the component $G_i$ to $G_j$,
see \cite[Theorem 3.2.4]{CombMath}.
The form in (\ref{frob}) is called the \emph{Frobenius Normal Form} of $A$.

\begin{remark}
The effect of the matrix $P$ on the dependency graph is the
relabeling of the vertices of $G$ such that the diagonal blocks
correspond to strongly connected components of $G$. In Example
\ref{ex:run} above, the adjacency matrix of the dependency graph
is in the normal form.
\end{remark}

\vspace{.2cm} \noindent {\bf \emph{Example}} \ref{ex:run} (Cont.). The
dependency graph $G$ in Figure \ref{dep7} has 3 strongly connected
components and their vertex sets are: $V_1 = \{1,2,3\}, V_2 =
\{4\}$, and $V_3 = \{5,6\}$, see Figure \ref{disjoint-comp} (left).

\vspace{.2cm} \noindent For any non-empty strongly connected
component $G_i$, let $h_i$ be
the conjunctive Boolean network with dependency graph $\dd(h_i) =
G_i$.
Let $h: \ff_2^n \longrightarrow \ff_2^n$ be the conjunctive
Boolean network defined by $h = (h_1,\dots,h_t)$. That is, the
dependency graph of $h$ is the disjoint union of the strongly
connected graphs $G_1,\ldots ,G_t$.

\vspace{.2cm} \noindent{\bf \emph{Example}} \ref{ex:run} (Cont.). The
conjunctive Boolean network $h$ corresponding to the disjoint
union is $h:
\ff_2^6\longrightarrow \ff_2^6$ and given by $h(x_1,\dots,x_6)
=(h_1(x_1,x_2,x_3),h_2(x_4),h_3(x_5,x_6))$ where
$h_1(x_1,x_2,x_3)= (x_2x_3,x_1,x_2)$, \,
$h_2(x_4) = x_4$, and
$h_3(x_5,x_6)= (x_6,x_5)$.

\noindent
Now define the following relation among the strongly connected
components $G_1,\dots,G_t$ of the dependency graph $\dd(f)$ of the network $f$.
\begin{equation}
G_i \preceq G_j \mbox{ if there is at least one edge from a vertex in } G_i \mbox{ to a vertex in } G_j.
\end{equation}
Since $G_1,\dots,G_t$ are the strongly connected components of
$\dd(f)$, the set of strongly components with the relation
$\preceq$ is a partially ordered set $\mathcal{P}$. In this paper,
we relate the dynamics of $f$ to the dynamics of its strongly
connected components and their poset $\mathcal{P}$.

\vspace{.2cm}
\noindent {\bf \emph{Example}} \ref{ex:run} (Cont.). The poset of the strongly connected components of $f$
is in Figure \ref{disjoint-comp}(right).
\begin{figure}[htbp]
\begin{center}
\includegraphics[width=3in]{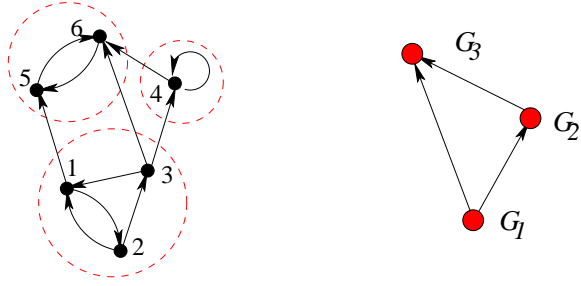}
\caption{The strongly connected components of $f$ (left) and their poset (right).}
\label{disjoint-comp}
\end{center}
\end{figure}

\vspace{.2cm}\noindent For any non-negative matrix $A$, the
sequence $\{A,A^2,\dots\}$ has been studied extensively, see, for
example, \cite{CombMath,sachkov}. Next we use the Boolean
operators AND and OR to examine the sequence of powers of $A$ and
infer results about the conjunctive Boolean network that
corresponds to an adjacency matrix $A$.

\subsection{Powers of Boolean Matrices}

Let $A,B$ be $n \times n$ Boolean matrices (all entries are either
$0$ or $1$). Define $A \otimes B$ such that $(A\otimes B)_{ij} =
\bigvee_{k=1}^n (A_{ik} \wedge B_{kj})$, where $\vee$ (resp.
$\wedge$) is the Boolean OR (resp. AND) operator.
\begin{proposition}\label{power-iterate}
Let $A, B$ be as above and let $f,g : \ff_2^n \longrightarrow
\ff_2^n$ be the two conjunctive Boolean networks that correspond
to the adjacency matrices $A$ and $B$, respectively. That is, $f_i
= x_1^{a_{i1}}x_2^{a_{i2}} \cdots x_n^{a_{in}}$ and $g_i =
x_1^{b_{i1}}x_2^{b_{i2}} \cdots x_n^{b_{in}}$ for all $i$. Then
the adjacency matrix corresponding to $f \circ g$ is $A \otimes
B$.
\end{proposition}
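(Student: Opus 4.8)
The plan is to compute the composite map $f \circ g$ coordinate by coordinate and then simply read off its dependency graph. First I would unwind the definitions: since $f_i = x_1^{a_{i1}} \cdots x_n^{a_{in}}$, the $i$-th coordinate of the composite is
\[
(f\circ g)_i = f_i\bigl(g_1,\dots,g_n\bigr) = \prod_{k=1}^n g_k^{\,a_{ik}},
\]
and substituting $g_k = x_1^{b_{k1}} \cdots x_n^{b_{kn}}$ gives the monomial $(f\circ g)_i = \prod_{k=1}^n \prod_{l=1}^n x_l^{\,a_{ik}b_{kl}}$, whose (integer) exponent on the variable $x_l$ is $\sum_{k=1}^n a_{ik}b_{kl}$.

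The crucial step, and the only place where anything subtle happens, is the passage from integer exponents to Boolean presence. Because every coordinate takes values in $\ff_2$, the idempotence relation $x^2 = x$ holds, so in the polynomial (square-free) form of a Boolean function only whether a variable occurs matters, not with what multiplicity. Hence $x_l$ appears in $(f\circ g)_i$ exactly when the integer sum $\sum_k a_{ik}b_{kl}$ is nonzero, i.e. exactly when there is some index $k$ with $a_{ik}=1$ and $b_{kl}=1$. This presence condition is precisely the Boolean expression $\bigvee_{k=1}^n (a_{ik}\wedge b_{kl})$, which by definition is $(A\otimes B)_{il}$.

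Finally I would invoke the definition of the dependency graph: the $(i,l)$ entry of the adjacency matrix of $f\circ g$ equals $1$ if and only if $x_l$ appears in $(f\circ g)_i$, which we have just matched with $(A\otimes B)_{il}$. Comparing entrywise then yields that the adjacency matrix of $f\circ g$ is $A\otimes B$. I expect no real obstacle beyond correctly flagging the exponent collapse of the previous paragraph; it is exactly this idempotence that forces the Boolean product $\otimes$ (built from $\vee$ and $\wedge$), rather than ordinary integer matrix multiplication, to be the operation that mirrors composition of conjunctive networks.
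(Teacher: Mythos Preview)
Your proof is correct and follows essentially the same route as the paper: expand the composite coordinatewise, collect integer exponents, use the idempotence $x^q=x$ in $\ff_2$ to reduce to a presence condition, and recognize that condition as the $(i,l)$ entry of the Boolean matrix product $A\otimes B$.
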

\begin{proof}
It is easy to see that, for all $i$,
\begin{eqnarray*}
f_i(g_1,\dots,g_n) &=& g_1^{a_{i1}} \cdots g_n^{a_{in}} \\
                    &=& (x_1^{b_{11}}x_2^{b_{12}} \cdots x_n^{b_{1n}})^{a_{i1}} \cdots (x_1^{b_{n1}}x_2^{b_{n2}} \cdots x_n^{b_{nn}})^{a_{in}} \\
                    &=& x_1^{a_{i1}b_{11}+\cdots+a_{in}b_{n1}} \cdots x_n^{a_{i1}b_{1n}+\cdots+a_{in}b_{nn}} \\
                    &=& x_1^{\sum_{j=1}^n a_{ij}b_{j1}} \cdots x_n^{\sum_{j=1}^n a_{ij}b_{jn}}
\end{eqnarray*}
Since we are working over $\ff_2$, for all $1 \leq k \leq n$, we
have $x_k^q = x_k$ for all positive integers $q$. Thus
\begin{eqnarray*}
x_k \mbox{ divides }  f_i(g_1,\dots,g_n)  &\iff& \sum_{j=1}^n a_{ij}b_{jk} \geq 1 \\
                                        &\iff& a_{ij_0}b_{j_0k} = 1 \mbox{ for some } j_0 \\
                                        &\iff& a_{ij_0} \wedge b_{j_0k} = 1 \mbox{ for some } j_0 \\
                                        &\iff& \bigvee_{j=1}^n a_{ij} \wedge b_{jk} = 1 \\
                                        &\iff& (A\otimes B)_{ik} = 1.
\end{eqnarray*}
Thus the matrix $(A\otimes B)$ is the adjacency matrix of $f \circ g$.
\end{proof}

\noindent Throughout this paper, we use $A^s$ to denote
$\underbrace{A \otimes \cdots \otimes A}_{s \ times}$.

\begin{corollary}
Let $f$ be a conjunctive Boolean network and let $A$ be the
adjacency matrix of its dependency graph $\dd(f)$. Then $A^s$ is
the adjacency matrix of $f^s$.
\end{corollary}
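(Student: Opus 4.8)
The plan is to prove the statement by induction on $s$, with Proposition \ref{power-iterate} doing essentially all the work. The base case $s=1$ is immediate: by hypothesis $A$ is the adjacency matrix of $\dd(f)$, so it is the adjacency matrix of the network $f = f^1$, while $A^1 = A$ by our convention. There is nothing to check beyond matching the two definitions.

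For the inductive step I would assume that $A^{s-1}$ is the adjacency matrix of $f^{s-1}$ and then write $f^s = f \circ f^{s-1}$. Applying Proposition \ref{power-iterate} with the outer network taken to be $f$ (adjacency matrix $A$) and the inner network taken to be $f^{s-1}$ (adjacency matrix $A^{s-1}$, by the inductive hypothesis) yields at once that the adjacency matrix of $f^s$ is $A \otimes A^{s-1}$.

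The only remaining bookkeeping is to identify $A \otimes A^{s-1}$ with the $s$-fold product $\underbrace{A \otimes \cdots \otimes A}_{s}$ that we have agreed to denote $A^s$. This requires nothing more than associativity of $\otimes$, which holds because $\otimes$ is ordinary matrix multiplication over the Boolean semiring $(\{0,1\}, \vee, \wedge)$; hence the $s$-fold product is unambiguous and equals $A \otimes A^{s-1}$, closing the induction.

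I do not anticipate a genuine obstacle: the result is a direct corollary, and the single point that deserves a moment's care is the well-definedness of the notation $A^s$, i.e.\ the associativity of $\otimes$ that makes the $s$-fold product meaningful. One could equally well factor $f^s = f^{s-1} \circ f$ and invoke Proposition \ref{power-iterate} to get $A^{s-1} \otimes A$; by the same associativity the two routes agree, so either ordering completes the argument.
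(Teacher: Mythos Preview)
Your proposal is correct and mirrors the paper's approach: the paper states this as an immediate corollary of Proposition~\ref{power-iterate} with no explicit proof, and your induction is exactly the routine unpacking of that inference. The one implicit point you rely on---that $f^{s-1}$ is again a conjunctive Boolean network so that Proposition~\ref{power-iterate} applies---is itself a by-product of the proposition (the composite has monomial coordinate functions determined by $A\otimes B$), so nothing is missing.
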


\subsection{The Loop Number}

An invariant of a strongly connected graph, called the loop
number, was defined in \cite{CLP}. We generalize this definition
to any directed graph.
\begin{definition}
The \emph{loop number} of a strongly connected graph is the greatest common divisor of the
lengths of its simple (no repeated vertices) directed cycles.
We define the loop number of a trivial strongly connected graph to be 0.
The loop number of any directed graph $G$ is the least
common multiple of the loop numbers of its non-trivial strongly connected components.
\end{definition}
\begin{remark}
Let $G$ be a directed graph and $A$ its adjacency matrix.
\begin{enumerate}
\item The loop number of $G$ is the same as
    the \emph{index of cyclicity} of $G$  as in \cite{deschut00} and the \emph{index of imprimitivity}
    of $A$ as in \cite{CombMath,nonnegative:1994}.
\item The loop number can be computed in  polynomial time,
    see \cite{CLP} for an algorithm.
\item If the loop number of $G$ is 1, the adjacency matrix $A$ of $G$ is called
      \emph{primitive}, see \cite{CombMath}.
\end{enumerate}
\end{remark}

\vspace{.2cm}
\noindent
{\bf \emph{Example}} \ref{ex:run} (Cont.).
The loop numbers of $V_1,V_2,V_3$ are 1,1,2, respectively. See Figure \ref{disjoint-comp}(left). In particular,
the loop number of $G$ is 2.

\begin{definition}
The exponent of an irreducible matrix $A$ with loop number $c$ is
the least positive integer $k$ such that $A^{k+c} = A^k$.
\end{definition}

The following lemma follows from Proposition \ref{power-iterate} and the definition above.
\begin{lemma}\label{loop-period}
Let $f$ be a conjunctive Boolean network and let $A$ be the adjacency matrix of
its dependency graph $\dd(f)$. Suppose the loop number of $A$ is $c$ and its exponent
is $k$. Then $\h(f) =k$ and $\p(f)$ divides $c$. In particular, $C(f)_i = 0$ for every $i \nmid c$
and hence Equation (\ref{cyc-struct}) becomes
\begin{equation}\label{cyc-struct-1}
\cc(f) = \sum_{i | c} C(f)_i \cc_i,
\end{equation}
\end{lemma}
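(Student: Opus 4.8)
The plan is to convert the matrix identity $A^{k+c}=A^k$, which is exactly the defining property of the exponent $k$ and loop number $c$, into the functional identity $f^{k+c}=f^k$, and then to read off both the height and the cycle structure directly from it. The bridge between matrices and maps is the preceding Corollary, asserting that $A^s$ is the adjacency matrix of $f^s$. First I would upgrade this to a bijection: two Boolean matrices agree if and only if the associated monomial maps agree. Indeed, by the Corollary the $i$-th coordinate of $f^s$ is the squarefree monomial $\prod_j x_j^{(A^s)_{ij}}$, and since the squarefree monomials form a basis of the reduced algebra $\ff_2[x_1,\dots,x_n]/(x_j^2-x_j)$ (distinct ones are distinct functions on $\ff_2^n$), we get $f^s=f^t \iff A^s=A^t$. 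I would also record, via the Remark identifying the loop number with the index of cyclicity, that $c$ is the eventual period of the sequence $A,A^2,A^3,\dots$ while $k$ is its pre-period, so that $A^{k+c}=A^k$ holds with $k$ least.

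Next I would establish $\p(f)\mid c$ and the vanishing of $C(f)_i$ for $i\nmid c$ in one stroke. From $A^{k+c}=A^k$ and the bijection above, $f^{k+c}=f^k$. Applying this to an arbitrary periodic point $\vv$ on a limit cycle of length $\ell$, and using that $f^a(\vv)$ depends only on $a \bmod \ell$ with distinct residues giving distinct points of the cycle, the equality $f^{k+c}(\vv)=f^k(\vv)$ forces $(k+c)\equiv k \pmod{\ell}$, i.e.\ $\ell \mid c$. Hence every limit-cycle length divides $c$, which is precisely $C(f)_i=0$ for $i\nmid c$; taking the least common multiple of these lengths yields $\p(f)\mid c$, so (\ref{cyc-struct}) collapses to (\ref{cyc-struct-1}).

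For the height I would prove the two inequalities separately. For $\h(f)\le k$: from $f^{k+c}=f^k$ we obtain $f^c(f^k(\uu))=f^k(\uu)$ for every $\uu\in\ff_2^n$, so $f^k(\uu)$ is periodic for all $\uu$, giving $\h(f)\le k$. For $\h(f)\ge k$: set $m=\h(f)$, so $f^m(\uu)$ is periodic for every $\uu$, lying on a cycle whose length divides $c$ by the previous paragraph; therefore $f^{m+c}(\uu)=f^c(f^m(\uu))=f^m(\uu)$ for all $\uu$, i.e.\ $f^{m+c}=f^m$, and the bijection returns $A^{m+c}=A^m$. By minimality of the exponent $k$ (the least integer with $A^{k+c}=A^k$) this gives $m\ge k$. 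Combining the two inequalities yields $\h(f)=k$.

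The step I expect to be the main obstacle is the injectivity half of the matrix–map correspondence, $f^s=f^t\Rightarrow A^s=A^t$, since this is exactly what powers the height lower bound: the Corollary supplies only the forward direction, and one must be certain that recording a composite conjunctive network by its squarefree exponent matrix loses no information. Once that is secured, the rest is bookkeeping with the pre-period $k$ and period $c$ of the power sequence, the only remaining care being the correct appeal to the Remark's identification of $c$ with the index of cyclicity, so that $A^{k+c}=A^k$ is genuinely available with $k$ minimal for a not-necessarily-irreducible $A$.
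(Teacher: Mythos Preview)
Your proposal is correct and follows essentially the same approach as the paper, which simply asserts that the lemma ``follows from Proposition~\ref{power-iterate} and the definition above'' without writing out details. You have carefully unpacked exactly what that means: translate $A^{k+c}=A^k$ into $f^{k+c}=f^k$ via the matrix--map correspondence, then read off the divisibility of cycle lengths and the two height inequalities. Your flagged concern about the injectivity $f^s=f^t\Rightarrow A^s=A^t$ is easily resolved (it is immediate from the one-to-one correspondence between conjunctive networks and their adjacency matrices noted earlier in the paper), and your caveat about the definition of exponent being stated only for irreducible $A$ is a fair observation about the paper's exposition rather than a gap in your argument.
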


\vspace{.2cm}
\noindent {\bf \emph{Example}} \ref{ex:run} (Cont.).
The phase space $\s(f)$ has $2^6$ vertices, its cycle
structure is $\cc(f) = 4\cc_1+1\cc_2$.
In particular, the period of $f$ is 2 which is the same as the loop number of
its dependency graph $G$.

\begin{remark}\label{lit-rem}
It is clear that if $x_i$ appears in $f_i$ for all $i$, then
the loop number of $\dd(f)$ is one and hence $\s(f)$ has only fixed points
as limit cycles, this was shown in \cite[Theorem 3.3]{barrett-chen}.
Also if each edge in the dependency graph is
undirected, then the dependency graph is made up of simple cycles of length 2 and
hence the loop number of $\dd(f)$ is either two or one. Thus $\s(f)$
has only fixed points and possibly cycles of length two, which was shown in \cite[Lemma 1]{golesHer}.
\end{remark}

For the case when $\dd(f)$ is strongly connected $\p(f) = c$ as was shown in \cite[Theorem 4.13]{CLP};
in particular, $\s(f)$ has a simple cycle of length $l$ if and only if $l$ divides $c$.
In general, however, this is not the case.

\begin{example}\label{2-3}
Consider the Boolean network $f=(x_2,x_1,x_2x_5,x_3,x_4):\ff_2^5 \longrightarrow \ff_2^5$.
The graph $\dd(f)$ has two strongly connected components with loop numbers 2 and 3 respectively, and hence
the loop number of $\dd(f)$ is 6. However,
it is easy to see, using DVD \cite{dvd},
that $\cc(f) = 3\cc_1+1\cc_2+2\cc_3$, and hence $f$ has no cycle of length 6.
\end{example}

The exponent has been studied extensively and
upper bounds are known, \cite[Theorem 3.11]{deschut00} presents
an upper bound for the exponent of any irreducible matrix. Using
the lemma above, we rewrite this upper bound for the height of any
Boolean monomial system with a strongly connected dependency
graph.

\begin{theorem}
Let $f$ be a conjunctive Boolean network with strongly connected
dependency graph $\dd(f)$, and suppose the loop number of $\dd(f)$
is $c$. Then
\begin{equation*}
\h(f) \leq \left\{%
\begin{array}{ll}
    (n-1)^2+1, & \hbox{ if \,}c = 1;\\
    \max\{n-1,\dfrac{n^2-1}{2}+\dfrac{n^2}{c}-3n+2c\}, & \hbox{ if \,} c > 1.\\
\end{array}%
\right.
\end{equation*}
\end{theorem}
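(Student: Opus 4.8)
The plan is to reduce the statement entirely to a known bound on the exponent of an irreducible Boolean matrix, and then to check that the two displayed cases are merely reformattings of that bound. First, since $\dd(f)$ is strongly connected and, by our standing assumption, every vertex has positive in-degree, the adjacency matrix $A$ of $\dd(f)$ is irreducible, and its index of imprimitivity equals the loop number $c$. By Lemma~\ref{loop-period}, $\h(f)$ equals the exponent $k$ of $A$, i.e. the least positive integer with $A^{k+c}=A^{k}$ under Boolean matrix powers. Thus bounding $\h(f)$ is exactly the problem of bounding the exponent (index of convergence) of an irreducible $n\times n$ Boolean matrix with index of imprimitivity $c$.

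Second, I would invoke the established upper bound for this exponent, namely \cite[Theorem~3.11]{deschut00}, which expresses the bound in terms of $n$ and $c$. Substituting $\h(f)=k$ from the previous step yields an upper bound of the required shape, so the only remaining task is bookkeeping: to verify that the cited bound, possibly stated in a different normalization, specializes to $(n-1)^2+1$ when $c=1$ and to $\max\{\,n-1,\ \frac{n^2-1}{2}+\frac{n^2}{c}-3n+2c\,\}$ when $c>1$. The case $c=1$ is precisely the primitive case, where the bound is the classical Wielandt bound $(n-1)^2+1$, so that case should fall out immediately.

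For the case $c>1$, I would trace how the cited general bound is obtained, since that is where the genuine content lies and where any discrepancy in constants would surface. The standard route is to permute $A$ into cyclic block form with cyclic classes of sizes $n_1,\dots,n_c$ satisfying $n_1+\dots+n_c=n$, so that $A^{c}$ is block diagonal with primitive diagonal blocks; the $i$-th block is $n_i\times n_i$, and its primitivity exponent is bounded via Wielandt in terms of $n_i$. Combining these per-block estimates with the lengths of paths between cyclic classes bounds the exponent of $A$, after which one maximizes the resulting expression over all admissible compositions $n_1+\dots+n_c=n$. I expect this extremal optimization over the block sizes to be the main obstacle: it is what produces the $\frac{n^2}{c}$ and $2c$ terms, while the $n-1$ term enters as a floor guarding against degenerate configurations (for instance near-pure cycles) where the quadratic expression would underestimate the true transient or even turn negative. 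Verifying that this optimization reproduces the displayed constants exactly is the crux; the reduction carried out in the first two steps is routine.
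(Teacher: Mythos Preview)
Your proposal is correct and matches the paper's approach: the paper's proof is a one-liner that invokes Proposition~\ref{power-iterate} (which underlies the Lemma~\ref{loop-period} you cite) together with \cite[Theorem~3.11]{deschut00}. Your third paragraph, sketching the cyclic-block derivation and the optimization over class sizes, goes beyond what the paper does---the paper simply quotes the cited bound without re-deriving or verifying its constants.
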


\begin{proof}
The proof follows from Proposition \ref{power-iterate} above and
\cite[Theorem 3.11]{deschut00}.
\end{proof}

Furthermore, \cite[Theorem 3.20]{deschut00} also implies an upper
bound for the height of any conjunctive Boolean network.
\begin{theorem}
Let $f$ be any conjunctive Boolean network on $n$ nodes. Then
$\h(f) \leq 2n^2-3n+2$.
\end{theorem}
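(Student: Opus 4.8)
The plan is to translate the assertion about the height $\h(f)$ into a purely combinatorial statement about the sequence of Boolean powers of the adjacency matrix $A$ of $\dd(f)$, and then invoke the known upper bound for the index of that sequence in the general (possibly reducible) case.

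First I would recall the dictionary established earlier in this section. By the Corollary to Proposition \ref{power-iterate}, the $s$-fold Boolean product $A^s = A \otimes \cdots \otimes A$ is precisely the adjacency matrix of $f^s$ for every $s$. Consequently the dynamical transient of $f$ is governed by the algebraic transient of the power sequence $\{A, A^2, A^3, \dots\}$: the point $f^s(\uu)$ is periodic for all $\uu$ exactly when this Boolean matrix sequence has entered its periodic tail. More precisely, if $c$ is the loop number of $A$, then $\{A^s\}$ is eventually periodic with period dividing $c$, and if $k$ denotes the least index from which periodicity holds (the exponent, in the sense of the definition preceding Lemma \ref{loop-period}), then Lemma \ref{loop-period} gives $\h(f) = k$. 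Thus it suffices to bound $k$, the index of convergence of the Boolean matrix $A$, by $2n^2 - 3n + 2$.

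Next I would apply \cite[Theorem 3.20]{deschut00}, which supplies exactly such a bound for an arbitrary $n \times n$ Boolean matrix. The essential point is that, unlike \cite[Theorem 3.11]{deschut00} used in the preceding theorem, this result does not assume $A$ is irreducible, and therefore it covers the general situation in which $\dd(f)$ need not be strongly connected. Combining this bound with the identification $\h(f) = k$ from the previous paragraph yields the claimed inequality $\h(f) \leq 2n^2 - 3n + 2$, uniformly over all conjunctive Boolean networks on $n$ nodes and with no hypothesis on the loop number $c$.

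The only real obstacle here is bookkeeping rather than mathematics. I must make sure that the quantity bounded in \cite[Theorem 3.20]{deschut00} is literally the index $k$ appearing in Lemma \ref{loop-period}, that is, that de Schutter's notion of the transient length of the power sequence coincides with our exponent. Since both are defined as the least $s$ after which the consecutive Boolean powers become periodic, this matching is immediate, so no separate estimate is required and the heavy lifting is entirely supplied by the cited theorem together with Proposition \ref{power-iterate}.
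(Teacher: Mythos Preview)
Your proposal is correct and follows essentially the same route as the paper: reduce $\h(f)$ to the transient index of the Boolean power sequence of $A$ via Proposition~\ref{power-iterate} (and its consequence Lemma~\ref{loop-period}), then invoke \cite[Theorem~3.20]{deschut00} for the general, not-necessarily-irreducible bound. The paper's own proof is the one-line version of exactly this argument.
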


\begin{proof}
The proof follows from Proposition \ref{power-iterate} above and
\cite[Theorem 3.20]{deschut00}.
\end{proof}

We close this section with a classical theorem about positive
powers of Boolean matrices. This theorem has the remarkable
corollary that almost all conjunctive Boolean networks have a
strongly connected dependency graph and, furthermore, have only
fixed points as limit cycles.
\begin{theorem}\cite[Theorem 3.5.11]{CombMath} \label{Th:strongly_connected}
Let $N(n)$ be the number of conjunctive Boolean networks on $n$
nodes with strongly connected dependency graph and loop number 1.
Then
\begin{displaymath}
\lim_{n\rightarrow \infty} \dfrac{N(n)}{2^{n^2}} = 1.
\end{displaymath}
In particular, since there are $2^{n^2}$ different conjunctive
Boolean networks on $n$ nodes, as $n \rightarrow \infty$, almost all
conjunctive Boolean networks on $n$ nodes have a strongly connected
dependency graph and have only fixed points as limit cycles.
\end{theorem}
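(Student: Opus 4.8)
The plan is to recast the statement as a probabilistic estimate. A uniformly random $n \times n$ Boolean matrix $A$ corresponds to a random directed graph $G$ on $n$ vertices in which each of the $n^2$ possible directed edges (self-loops included) is present independently with probability $1/2$. Dividing a count of matrices by $2^{n^2}$ is then exactly computing a probability, so the claim becomes $\Pr[\,G \text{ strongly connected and loop number } 1\,]\to 1$. I would establish this by showing that the two complementary ``bad'' events each have vanishing probability.

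First I would control strong connectivity by a cut-set union bound. If $G$ is not strongly connected, then fixing a vertex $u$ and letting $S$ be the set of vertices reachable from $u$, one sees that $S$ is a proper nonempty subset with no edges leaving it. Hence
\begin{equation*}
\Pr[\,G \text{ not strongly connected}\,] \le \sum_{k=1}^{n-1}\binom{n}{k}\,2^{-k(n-k)},
\end{equation*}
since for a fixed $S$ with $|S|=k$ there are $k(n-k)$ potential edges from $S$ to its complement, each absent with probability $1/2$. The extreme terms $k=1$ and $k=n-1$ contribute $n\,2^{-(n-1)}$ each, while for $2\le k\le n-2$ one has $k(n-k)\ge 2(n-2)$ and $\binom{n}{k}\le 2^{n}$, so each such term is at most $2^{n}2^{-2(n-2)}=2^{-n+4}$; summing the $O(n)$ remaining terms shows the whole expression tends to $0$.

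Next I would handle the loop number. The key observation is that strong connectivity depends only on the $n(n-1)$ off-diagonal entries of $A$, whereas the presence of a self-loop depends only on the $n$ diagonal entries, and these two blocks of entries are independent. A strongly connected graph possessing at least one self-loop has a simple directed cycle of length $1$, so its loop number is the $\gcd$ of a set containing $1$, namely $1$. The probability that $G$ has no self-loop is $2^{-n}$. Writing $B$ for the event that $G$ is strongly connected and $L$ for the event that $G$ has a self-loop, every graph in $B\cap L$ is strongly connected with loop number $1$ and is therefore counted by $N(n)$, so
\begin{equation*}
\frac{N(n)}{2^{n^2}} \ge \Pr[B\cap L] = \Pr[B]\,\Pr[L] = \Pr[B]\,(1-2^{-n}).
\end{equation*}
Since $\Pr[B]\to 1$ by the previous step and $1-2^{-n}\to 1$, while trivially $N(n)/2^{n^2}\le 1$, a squeeze gives the limit $1$.

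I expect the main obstacle to be the strong-connectivity estimate: one must verify that the union bound over all cut sets converges to zero despite the exponentially many subsets, which succeeds only because the exponent $k(n-k)$ in $2^{-k(n-k)}$ grows fast enough to beat $\binom{n}{k}$ away from the two extreme cuts. The loop-number step is by contrast essentially immediate, once one notices the independence of the diagonal entries from the connectivity structure and that a single self-loop already forces the loop number to be $1$.
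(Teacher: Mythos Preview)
Your argument is correct. Note, however, that the paper does not supply its own proof of this statement: it is quoted as \cite[Theorem 3.5.11]{CombMath} and used as a black box, so there is no in-paper argument to compare against. What you have written is a clean self-contained proof of this classical fact about primitive $0$--$1$ matrices: the cut-set union bound for strong connectivity is standard, and your observation that the diagonal entries are independent of the off-diagonal structure (so a single self-loop, present with probability $1-2^{-n}$, forces loop number $1$) is exactly the right shortcut to finish.
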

Although there is a wealth of information about powers of
non-negative matrices such as the transient length or possible
cycle length, very little seems to be known about the number of
cycles of a given length in the phase space and that is the main
goal of this paper. In the next section we give a complete answer
to this problem for conjunctive Boolean networks with strongly
connected dependency graph. For this class of systems, we find the
number of cycles of any possible length in the phase space.

\section{Networks with Strongly Connected Dependency Graph}
\label{stronglyconnected}

In this section we give an exact formula for the cycle structure
of conjunctive Boolean networks with strongly connected dependency
graphs. Since "almost all" conjunctive Boolean networks have this
property by Theorem \ref{Th:strongly_connected}, one may consider
this result as giving a complete answer for conjunctive Boolean
networks in the limit. However, in the next section we will also
consider networks with general dependency graphs and give upper
and lower bounds for the cycle structure.

Before deriving the desired state space results we prove some
needed facts about general finite dynamical systems.

\begin{lemma}
Let $f : X \longrightarrow X$ be a finite dynamical system, with
$X$ a finite set, and let $\uu  \in X$ be a periodic point of
period $t$.
\begin{enumerate}
\item If $f^s(\uu) = \uu$, then $t$ divides $s$. \label{item1}
\item The period of $f^j(\uu)$ is $t$, for any $ j \geq 1$.
\label{item2} \item If $f^s(\uu) = f^j(\uu)$, then $t$ divides
$s-j$. \label{item3}
\end{enumerate}
\end{lemma}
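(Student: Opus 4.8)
The plan is to prove the three statements in sequence, since each builds naturally on the previous one, relying only on the elementary fact that $\uu$ is periodic of period $t$, i.e. $t$ is the \emph{smallest} positive integer with $f^t(\uu)=\uu$.

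For part \eqref{item1}, I would argue by the division algorithm. Write $s = qt + r$ with $0 \leq r < t$. Since $f^t(\uu)=\uu$, iterating gives $f^{qt}(\uu)=\uu$, and hence
\[
\uu = f^s(\uu) = f^{r}(f^{qt}(\uu)) = f^{r}(\uu).
\]
Because $0 \leq r < t$ and $t$ is the least positive integer returning $\uu$ to itself, the only possibility is $r=0$, so $t \mid s$. The main thing to be careful about is the edge case $r=0$ versus $r>0$: minimality of $t$ forbids $0 < r < t$, which is exactly what forces $r=0$.

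For part \eqref{item2}, let $t'$ denote the period of $f^j(\uu)$; I must show $t'=t$. On one hand, applying $f^t$ to $f^j(\uu)$ gives $f^t(f^j(\uu)) = f^j(f^t(\uu)) = f^j(\uu)$, so $f^j(\uu)$ returns to itself after $t$ steps, whence $t' \mid t$ by part \eqref{item1}. Conversely, $f^{t'}(f^j(\uu)) = f^j(\uu)$, i.e. $f^{j}(f^{t'}(\uu)) = f^j(\uu)$; I then want to cancel the $f^j$ to conclude $f^{t'}(\uu)=\uu$. This cancellation is the one genuinely delicate point, since $f$ need not be injective on all of $X$. The clean way around it is to use the periodicity of $\uu$ itself: choose any multiple $mt \geq j$, so $f^{mt}(\uu)=\uu$, and observe $f^{mt - j}(f^j(\uu)) = \uu$; combining with $f^{t'}(f^j(\uu))=f^j(\uu)$ lets me write $\uu = f^{mt-j}(f^{t'}(f^j(\uu))) = f^{t'}(f^{mt}(\uu)) = f^{t'}(\uu)$, so by \eqref{item1} $t \mid t'$. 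Together with $t' \mid t$ this yields $t'=t$.

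For part \eqref{item3}, assume without loss of generality $s \geq j$ (the roles are symmetric) and set $d = s - j \geq 0$. From $f^s(\uu)=f^j(\uu)$ we have $f^{d}(f^j(\uu)) = f^j(\uu)$, so $f^j(\uu)$ is fixed by $f^{d}$. By part \eqref{item2}, $f^j(\uu)$ has period exactly $t$, so part \eqref{item1} applied to the point $f^j(\uu)$ gives $t \mid d = s-j$, as desired. I expect the only real obstacle across the whole lemma to be the non-invertibility issue surfacing in \eqref{item2}; once that is handled by pushing everything back to the genuinely periodic point $\uu$ via a large multiple of $t$, parts \eqref{item1} and \eqref{item3} are routine applications of the division algorithm and minimality of the period.
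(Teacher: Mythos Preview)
Your proof is correct and follows essentially the same route as the paper: the division algorithm for part~(1), commutation of iterates for part~(2), and reduction of part~(3) to the first two parts. In fact you are more careful than the paper in part~(2): the paper only records $f^t(f^j(\uu))=f^j(\uu)$ and calls the rest ``straightforward,'' whereas you explicitly establish the reverse divisibility $t\mid t'$ by using a large multiple of $t$ to circumvent the non-invertibility of $f$.
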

\begin{proof}
Since $t$ is the period of $\uu$ and $f^s(\uu) = \uu$, by
definition $s \geq t$. Thus $s = qt+r$, where $0 \leq r < t$. Now
$\uu = f^s(\uu) = f^{r}(f^{qt}(\uu))= f^r(\uu)$. Since $r < t$ and
$t$ is the period of $\uu$, $r = 0$ and hence $t$ divides $s$.
That proves (\ref{item1}). The proof of (\ref{item2}) is
straightforward, since $f^t(f^j(\uu)) = f^j(f^t(\uu)) = f^j(\uu)$.
Now we prove (\ref{item3}). Suppose $ s \geq j$. Since $f^s(\uu) =
f^j(\uu)$, we have $f^{s-j}(f^j(\uu)) = f^s(\uu) = f^j(\uu)$.
Thus, by (\ref{item1}), the period of $f^j(\uu)$ divides $s-j$.
But the period of $f^j(\uu)$ is $t$, by (\ref{item2}).
\end{proof}

\begin{lemma}
Let $f: X \longrightarrow X$ be a finite dynamical system. Then,
$\p(f) = c$ and $\h(f) = d$ if and only if $c$ and $d$ are the
least positive integers such that $f^{m+c}(\uu) = f^m(\uu)$ for
all $m \geq d$ and $\uu \in X$.
\end{lemma}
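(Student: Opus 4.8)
The plan is to recast both directions of the equivalence through a single structural description of the stated condition. Write $Q(c,d)$ for the property that $f^{m+c}(\uu)=f^m(\uu)$ for all $m\ge d$ and all $\uu\in X$. The key claim, from which the lemma follows at once, is that $Q(c,d)$ holds if and only if $d\ge \h(f)$ and $\p(f)\mid c$. Granting this, the set of pairs $(c,d)$ satisfying $Q(c,d)$ is exactly $\{(c,d): \p(f)\mid c,\ d\ge \h(f)\}$, whose componentwise least element is $(\p(f),\h(f))$; hence ``the least $c$ and $d$ with $Q(c,d)$'' are precisely $\p(f)$ and $\h(f)$, which is both directions of the equivalence.

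To prove the characterization, I would argue each direction separately, leaning on the preceding lemma. For the ``if'' direction, assume $d\ge \h(f)$ and $\p(f)\mid c$. Fix $m\ge d$ and $\uu\in X$; since $m\ge \h(f)$, the point $f^m(\uu)$ is periodic, say of period $t$, and $t$ is the length of the limit cycle containing it, so $t\mid \p(f)\mid c$. Because a periodic point of period $t$ is fixed by $f^t$, and hence by any power $f^{kt}$, we get $f^{m+c}(\uu)=f^c\!\left(f^m(\uu)\right)=f^m(\uu)$, which is $Q(c,d)$. For the ``only if'' direction, assume $Q(c,d)$. Taking $m=d$ shows $f^{d+c}(\uu)=f^d(\uu)$ for every $\uu$, so each $f^d(\uu)$ is periodic; by the definition of height this forces $\h(f)\le d$. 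For the divisibility, let $\vv$ be any periodic point, of period $t$. Then $f^{m+c}(\vv)=f^m(\vv)$ for $m\ge d$, and part (3) of the preceding lemma, applied with $s=m+c$ and $j=m$, gives $t\mid c$. Taking the least common multiple over all periodic points yields $\p(f)\mid c$.

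The remaining work is bookkeeping: assembling the characterization into the componentwise-minimum statement and checking that the minimum is jointly attained, which holds because the constraint on $d$ is $d\ge \h(f)$ regardless of which admissible $c$ is chosen, so $(\p(f),\h(f))$ genuinely satisfies $Q$. The main point to handle carefully is the direction of divisibility supplied by the preceding lemma: that lemma yields $t\mid s$ \emph{from} an equality $f^s(\uu)=\uu$, so in the ``if'' direction I must invoke the elementary converse (period $t$ implies $f^t$ fixes the point) rather than the lemma itself, while in the ``only if'' direction part (3) gives exactly the needed $t\mid c$. Beyond that, the one subtlety is interpreting ``the least positive integers $c$ and $d$'' as a joint componentwise minimum, which the characterization shows to be well defined.
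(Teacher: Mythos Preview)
Your proof is correct and follows essentially the same argument as the paper: both directions rest on the observations that (i) for $m\ge \h(f)$ the point $f^m(\uu)$ is periodic with period dividing $\p(f)$, and (ii) conversely $Q(c,d)$ forces every $f^d(\uu)$ to be periodic and every cycle length to divide $c$. Your packaging via the explicit characterization $Q(c,d)\iff d\ge \h(f)$ and $\p(f)\mid c$ is a bit cleaner than the paper's, since it makes the minimality claim in the forward direction transparent (the paper's forward direction verifies $Q(\p(f),\h(f))$ but leaves the ``least'' part to be inferred from the backward direction).
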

\begin{proof}
Suppose that $\p(f) = c$ and $\h(f) = d$. Then for all $\uu \in X$
and $m \geq d$, $f^m(\uu)$ is a periodic point and hence its
period divides $c$. Thus $f^{m+c}(\uu) = f^c(f^m(\uu)) =
f^m(\uu)$.

Now suppose $c$ and $d$ are the least positive integers such that
$f^{m+c}(\uu) = f^m(\uu)$ for all $m \geq d$ and $\uu \in X$. We
want to show that $\p(f) = c$ and $\h(f) = d$. It is clear that
$f^d(\uu)$ is periodic for all $\uu \in X$ and $d$ is the least
such positive number. Thus the height of $f$ is $d$. Also, the
period $f$ is $c$, since $c$ is the least positive integer such
that $f^c(\uu)  = \uu$ for any periodic point $\uu \in X$.
\end{proof}

\begin{theorem}\label{cross-prod}
Let $f: X \longrightarrow X$ and $g : Y \longrightarrow Y$ be two
finite dynamical systems. Define the system $h : X \times Y
\longrightarrow X \times Y$ by $h(\uu,\uu') = (f(\uu),g(\uu'))$.
Then  $\s(h) = \s(f) \times \s(g)$.
\end{theorem}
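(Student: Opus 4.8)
The plan is to prove equality of the two directed graphs by checking that they share the same vertex set and the same edge set, once the correct notion of product has been fixed. First I would make explicit which graph product is meant: since every vertex of a phase space has out-degree exactly one (each such graph is the graph of a function), the product $\s(f) \times \s(g)$ must be interpreted as the tensor (categorical) product of directed graphs. Concretely, its vertex set is $X \times Y$, and it has a directed edge from $(\uu,\uu')$ to $(\vv,\vv')$ precisely when $\s(f)$ has an edge $\uu \to \vv$ and $\s(g)$ has an edge $\uu' \to \vv'$. With this convention the out-degree of every vertex of the product is again exactly one, so the product is itself a functional graph; this is a useful sanity check, since $\s(h)$ is functional by construction.

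Next I would observe that both graphs have the same vertex set: $\s(h)$ has vertex set $X \times Y$ by definition of the phase space of $h$, and so does the tensor product. It therefore remains only to compare edges.

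The core step is a one-line unwinding of definitions. In $\s(h)$ there is an edge from $(\uu,\uu')$ to $(\vv,\vv')$ if and only if $h(\uu,\uu') = (\vv,\vv')$, that is, if and only if $(f(\uu),g(\uu')) = (\vv,\vv')$, which is equivalent to the conjunction $f(\uu) = \vv$ and $g(\uu') = \vv'$. On the other hand, in $\s(f) \times \s(g)$ there is an edge from $(\uu,\uu')$ to $(\vv,\vv')$ if and only if $\uu \to \vv$ in $\s(f)$ and $\uu' \to \vv'$ in $\s(g)$, i.e. $f(\uu) = \vv$ and $g(\uu') = \vv'$. The two conditions coincide verbatim, so the edge sets agree and the graphs are equal.

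There is no real obstacle in the argument; the only point requiring care is the one flagged at the outset, namely pinning down the intended graph product. Because phase spaces are functional graphs, the tensor product is the only product compatible with the product dynamical system $h=(f,g)$, and I would state this explicitly to avoid ambiguity with the Cartesian or strong graph products. Everything else is a direct translation between the defining relation $h = (f,g)$ and the edge relation of the tensor product.
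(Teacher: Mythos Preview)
Your proof is correct and follows essentially the same approach as the paper: both arguments reduce to the observation that $h(\uu,\uu') = (\vv,\vv')$ if and only if $f(\uu) = \vv$ and $g(\uu') = \vv'$. Your version is simply more explicit, in particular by clarifying that the intended product is the tensor (categorical) product of directed graphs, a point the paper leaves implicit.
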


\begin{proof}
This follows from the fact that $h(\uu,\uu') = (\vv,\vv')$ if and
only if $f(\uu)= \vv$  and $g(\uu') =\vv'$ for all $\uu \in X$ and
$\uu' \in Y$.
\end{proof}

\begin{corollary}\label{prod:period}
Let $f$, $g$ and $h$ be as in Theorem \ref{cross-prod}. Then the
period of $h$ is $\p(h) = \lcm\{\p(f),\p(g)\}$ and its height is
$\h(h) = \max\{\h(f),\h(g)\}$.
\end{corollary}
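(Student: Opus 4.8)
The plan is to derive both equalities directly from the product decomposition of the phase space established in Theorem \ref{cross-prod}, namely $\s(h) = \s(f) \times \s(g)$. Since the period and height of a finite dynamical system are entirely determined by the combinatorial structure of its phase space, it suffices to understand how limit cycles and transients in a product of two directed graphs relate to those of the factors. First I would recall the characterization from the preceding lemma: $\p(h)$ and $\h(h)$ are the least positive integers $c$ and $d$ such that $h^{m+c}(\uu,\uu') = h^m(\uu,\uu')$ for all $m \geq d$ and all $(\uu,\uu') \in X \times Y$. This reduces the entire problem to analyzing the iterates of $h$.

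The key observation is that $h$ acts coordinatewise, so $h^s(\uu,\uu') = (f^s(\uu), g^s(\uu'))$ for every $s \geq 1$; this is an immediate induction from the definition $h(\uu,\uu') = (f(\uu),g(\uu'))$. Consequently, $h^{m+c}(\uu,\uu') = h^m(\uu,\uu')$ holds if and only if both $f^{m+c}(\uu) = f^m(\uu)$ and $g^{m+c}(\uu') = g^m(\uu')$ hold. I would then carry out the two claims separately. For the height, the condition ``$f^{m+c}(\uu)=f^m(\uu)$ and $g^{m+c}(\uu')=g^m(\uu')$ for all $m \geq d$'' requires $m \geq d$ to be large enough to place both coordinates on their respective cycles; the least such threshold is $\max\{\h(f),\h(g)\}$, since once $m$ exceeds both $\h(f)$ and $\h(g)$ each coordinate iterate is periodic, and if $m$ fell below either threshold some starting point would still be in a transient. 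For the period, once both coordinates are periodic, the simultaneous return condition $f^{c}(\uu)=\uu$ and $g^{c}(\uu')=\uu'$ for every periodic pair holds precisely when $c$ is a common multiple of both $\p(f)$ and $\p(g)$; the least such $c$ is $\lcm\{\p(f),\p(g)\}$. Here I would invoke part (\ref{item1}) of the first lemma, that a return time for a periodic point is necessarily a multiple of its period, to justify that no smaller common value works.

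The one point deserving care, and the step I expect to be the mildest obstacle, is the lower-bound direction for each quantity: showing that no smaller $c$ or $d$ can succeed. For the height this means exhibiting, when $d < \max\{\h(f),\h(g)\}$, a concrete pair whose larger-height coordinate has not yet reached its cycle at time $d$, so that the defining identity fails for some $m$ in the forbidden range; this follows by choosing a point realizing the height of whichever factor is larger and pairing it with any point of the other. For the period the analogous minimality uses that $\p(f)$ and $\p(g)$ are themselves the true periods, so a common return time strictly smaller than the $\lcm$ would force one coordinate to return in fewer than $\p(f)$ or $\p(g)$ steps, contradicting part (\ref{item1}). Both arguments are routine once the coordinatewise iteration formula is in hand, so the bulk of the proof is simply translating the product structure of $\s(h)$ through the ``least positive integer'' characterization of period and height.
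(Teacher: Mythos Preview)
Your argument is correct. It differs in emphasis from the paper's proof: the paper establishes the period claim by projecting a limit cycle $A \subset X \times Y$ to its coordinate images $A_X$ and $A_Y$, observing that these are cycles in $\s(f)$ and $\s(g)$ whose lengths have $\lcm$ equal to $|A|$, and then taking the $\lcm$ over all cycles; the height claim is dismissed as immediate from the definition. You instead route everything through the functional characterization of $\p$ and $\h$ given in the preceding lemma, reducing the question to when $h^{m+c}=h^m$ and exploiting the coordinatewise formula $h^s=(f^s,g^s)$. Both approaches rest on Theorem~\ref{cross-prod}, and both are short; your version has the minor advantage of treating period and height uniformly and of making the minimality step explicit, while the paper's cycle-projection viewpoint dovetails more directly with the next corollary on counting limit cycles in the product.
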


\begin{proof}
It is easy to see that a set $A \subset X \times Y$ is a cycle in
$\s(h)$ if and only if $A_X$ (resp. $A_Y$) is a cycle in $\s(f)$
(resp. $\s(g)$), where $A_X := \{\uu \in X \, : (\uu,\vv) \in A
\textit{ for some } \vv \in Y\}$. Furthermore, it is clear that
the length of the cycle $A$ is the least common multiple of the
lengths of $A_X$ and $A_Y$. Thus $\p(h) = \lcm\{\p(f),\p(g)\}$.
The proof of $\h(h) = \max\{\h(f),\h(g)\}$ follows from the
definition of height.
\end{proof}

Recall Equation (\ref{cyc-struct}), in particular, that $C(f)_m$
 is the number of cycles of length $m$ in the phase
space of $f$, the following corollary follows directly from
Theorem \ref{cross-prod}.

\begin{corollary}\label{cross-prod-cycles}
Let $f$, $g$ and $h$ be as above. Then, the cycle structure of $h$
is $\cc(h) = \cc(f)\cc(g) := \sum_{m | \p(h)} C(h)_m \cc_m$, where
\begin{equation}\label{cross-prod-formula}
C(h)_m = \sum_{\substack{s | p(f)\\ t | p(g)\\ \lcm\{s,t\} = m}}
\gcd\{s,t\}C(f)_s C(g)_t.
\end{equation}
That is, the generating function for the cycle structure of $h$ is
the product of the generating functions of $f$ and $g$, where
$\cc_s \cdot \cc_t  = \gcd\{s,t\} \cc_{\lcm\{s,t\}}$.
\end{corollary}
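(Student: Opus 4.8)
The plan is to lean entirely on Theorem~\ref{cross-prod}, which identifies $\s(h)$ with the product $\s(f)\times\s(g)$. As already observed in the proof of Corollary~\ref{prod:period}, a subset $A\subset X\times Y$ is a cycle of $h$ exactly when its two projections $A_X$ and $A_Y$ are cycles of $f$ and $g$; consequently every cycle of $h$ is contained in a product $A\times B$, where $A$ is a cycle of $f$ and $B$ is a cycle of $g$. It therefore suffices to analyze, for one fixed such pair, how the set $A\times B$ breaks up into cycles of $h$, and then to sum the resulting contributions over all pairs.

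First I would reduce $A\times B$ to a purely combinatorial model. If $A$ has length $s$ and $B$ has length $t$, choose base points $\uu_0\in A$ and $\uu_0'\in B$ and index $A$ by $\zz/s\zz$ via $k\mapsto f^k(\uu_0)$, and likewise $B$ by $\zz/t\zz$ via $k\mapsto g^k(\uu_0')$. Under this identification the restriction of $h$ to $A\times B$ becomes the diagonal shift $(a,b)\mapsto(a+1,b+1)$ on $\zz/s\zz\times\zz/t\zz$, so the entire question becomes counting the orbits of this shift.

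The main step, and the only place needing a genuine argument, is the orbit count. The orbit of a point $(a,b)$ first returns to itself after exactly $\ell$ steps, where $\ell$ is the least positive integer with $\ell\equiv 0\pmod{s}$ and $\ell\equiv 0\pmod{t}$ simultaneously, namely $\ell=\lcm\{s,t\}$; coordinatewise this is the content of part~(\ref{item1}) applied to each factor. The subtle point I expect here is the uniformity claim: \emph{every} orbit has this same length $\lcm\{s,t\}$, rather than some being shorter, which holds because a diagonal shift must complete whole numbers of both periods at once. Granting uniformity, the $st$ points of $A\times B$ are partitioned into orbits all of size $\lcm\{s,t\}$, so the number of cycles is $st/\lcm\{s,t\}=\gcd\{s,t\}$, each of length $\lcm\{s,t\}$.

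Finally I would assemble the count. Each cycle length of $f$ divides $\p(f)$ and each cycle length of $g$ divides $\p(g)$, so it is enough to range over $s\mid\p(f)$ and $t\mid\p(g)$. Each ordered pair $(A,B)$ of cycles of lengths $s$ and $t$ contributes $\gcd\{s,t\}$ cycles of length $\lcm\{s,t\}$ to $\s(h)$, and there are exactly $C(f)_s\,C(g)_t$ such pairs (no double counting, since $A$ and $B$ come from disjoint factors). Collecting all pairs whose least common multiple equals $m$ then gives
\[
C(h)_m=\sum_{\substack{s\mid\p(f)\\ t\mid\p(g)\\ \lcm\{s,t\}=m}}\gcd\{s,t\}\,C(f)_s\,C(g)_t,
\]
which is precisely (\ref{cross-prod-formula}); rephrasing the same bookkeeping as a product of generating functions yields the multiplication rule $\cc_s\cdot\cc_t=\gcd\{s,t\}\,\cc_{\lcm\{s,t\}}$.
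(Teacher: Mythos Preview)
Your proof is correct and follows precisely the natural route implied by the paper: the paper itself offers no argument beyond declaring that the corollary ``follows directly from Theorem~\ref{cross-prod},'' while you supply the standard details (reducing the product of an $s$-cycle and a $t$-cycle to the diagonal shift on $\zz/s\zz\times\zz/t\zz$, observing all orbits have length $\lcm\{s,t\}$, and counting $\gcd\{s,t\}$ of them). There is no divergence in method, only in explicitness.
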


\vspace{.2cm}\noindent
Let $f:\ff_2^n \longrightarrow \ff_2^n$ be a conjunctive Boolean
network. Assume that the dependency graph $\dd(f)$ of $f$ is
strongly connected with loop number $c$. For any divisor $k$ of
$c$, it is well-known that the set of vertices of $\dd(f)$ can be
partitioned into $c$ non-empty sets $W_1,\dots,W_k$ such that each
edge of $\dd(f)$ is an edge from a vertex in $W_i$ to a vertex in
$W_{i+1}$ for some $i$ with $1 \leq i \leq k$ and $W_{k+1} = W_1$.
For a proof of this fact see \cite[Lemma 3.4.1(iii)]{CombMath} or
\cite[Lemma 4.7]{CLP}.

By definition the length of any cycle in the phase space $\s(f)$
of $f$ divides c, the period of $f$. For any positive integers $p,
k$ that divide $c$, let $A(p)$ be the set of periodic points of
period $p$ and let $\ds D(k) := \bigcup_{p|k} A(p)$.
\begin{lemma}
 The cardinality of the set $D(k)$ is $|D(k)|=2^k$.
\end{lemma}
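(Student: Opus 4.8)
The plan is to reinterpret $D(k)$ as the fixed-point set of the iterate $f^k$ and then exploit the cyclic block structure to factor that iterate into a product of smaller strongly connected conjunctive networks, each of which contributes exactly one factor of $2$.

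First I would show that $D(k)=\{\uu\in\ff_2^n : f^k(\uu)=\uu\}$. If $\uu\in A(p)$ with $p\mid k$ then clearly $f^k(\uu)=\uu$; conversely, any $\uu$ with $f^k(\uu)=\uu$ lies on a cycle, hence is periodic of some least period $p$, and by part (\ref{item1}) of the first lemma of this section $p\mid k$, so $\uu\in A(p)\subseteq D(k)$. Thus $|D(k)|$ equals the number of fixed points of $f^k$, and by the corollary following Proposition \ref{power-iterate} the map $f^k$ is again a conjunctive Boolean network, with adjacency matrix $A^k$.

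Next I would invoke the cyclic partition $W_1,\dots,W_k$ of the vertices of $\dd(f)$, in which every edge runs from some $W_i$ to $W_{i+1}$. It follows that every directed path of length $k$ begins and ends in the same block $W_l$, so the coordinate function $(f^k)_i$ for $i\in W_l$ is a monomial in the variables indexed by $W_l$ alone. Hence $f^k$ is a direct product $g_1\times\cdots\times g_k$, where $g_l$ is the conjunctive network on $W_l$ whose dependency graph $H_l$ is the subgraph of $\dd(f^k)$ induced on $W_l$. I would then verify that each $H_l$ is strongly connected with positive in-degrees: given $a,b\in W_l$, a path from $a$ to $b$ in $\dd(f)$ has length divisible by $k$ (both endpoints lie in $W_l$), and cutting it into segments of length $k$ yields a path from $a$ to $b$ in $H_l$; the same backward-tracing along in-edges produces, for each $i\in W_l$, a length-$k$ predecessor in $W_l$. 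With this factorization, Theorem \ref{cross-prod} (or simply the coordinatewise description of a product) shows that $\uu$ is fixed by $f^k$ if and only if each restriction $\uu|_{W_l}$ is fixed by $g_l$, so $|D(k)|=\prod_{l=1}^{k}|\mathrm{Fix}(g_l)|$.

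The final ingredient, which I expect to be the crux, is that a conjunctive network with strongly connected dependency graph and positive in-degrees has exactly two fixed points, the all-$0$ and all-$1$ states. For a fixed point $\uu$ put $T=\{i:u_i=1\}$; the fixed-point equation $u_i=\bigwedge_{j\to i}u_j$ forces $T$ to be closed under taking predecessors, and strong connectivity then forces $T=\emptyset$ or $T=V(H_l)$, while both extremes are readily checked to be genuine fixed points (the all-$0$ state uses positive in-degree). Granting this, each of the $k$ factors contributes a factor of $2$, giving $|D(k)|=2^k$. The main obstacle is thus not the fixed-point count itself but the bookkeeping of the factorization step: one must confirm both that length-$k$ paths respect the blocks and that the induced subgraphs $H_l$ remain strongly connected, since the two-fixed-points argument only becomes available once strong connectivity of each $H_l$ has been secured.
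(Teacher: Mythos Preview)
Your argument is correct and genuinely different from the paper's. The paper exhibits an explicit bijection $\Phi:\ff_2^k\to D(k)$ sending $(x_1,\dots,x_k)$ to the state that is constantly $x_i$ on the block $W_i$; well-definedness of $\Phi$ is checked by invoking \cite[Theorem 4.10]{CLP}, and surjectivity is a direct path-length argument showing that any $z\in D(k)$ must be constant on each block. You instead identify $D(k)$ with $\mathrm{Fix}(f^k)$, factor $f^k$ as a product $g_1\times\cdots\times g_k$ along the cyclic block decomposition, verify each factor has strongly connected dependency graph, and then prove independently the pleasant lemma that a conjunctive network on a strongly connected graph with positive in-degrees has exactly the two constant fixed points. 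Your route is more modular and more self-contained: it avoids the external reference needed for the paper's well-definedness step, and it isolates the two-fixed-point fact as a reusable statement (which in the paper's logical order is only obtained \emph{after} this lemma, as the case $k=1$). The paper's route, on the other hand, is more explicit in that it names the bijection outright. Both ultimately rest on the same structural fact---that elements of $D(k)$ are exactly the states constant on each $W_i$---but arrive there by different reasoning.
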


\begin{proof}
Let $\Phi:\ff_2^k \longrightarrow D(k)$ be defined by
\begin{displaymath}
\Phi(x_1,\dots,x_k)=
(\underbrace{x_1,\dots,x_1}_{s_1 \ times}, \underbrace{x_2,\dots,x_2}_{s_2 \ times},\dots, \underbrace{x_k,\dots,x_k}_{s_k \ times}),
\end{displaymath}
where, without loss of generality,  $W_i = \{v_{i,1},\dots,
v_{i,s_{i}}\}$ for all  $1 \leq i \leq k$. We will show that
$\Phi$ is a bijection. First we show that $\Phi$ is well-defined.
Let $z=\Phi(x_1,\dots, x_k)$, by \cite[Theorem 4.10]{CLP}, there
exists a positive integer $m$ such that
\begin{displaymath}
f^{mk+j}(z)= (\underbrace{x_{j+1},\dots,x_{j+1}}_{s_1 \ times}, \underbrace{x_{j+2},\dots,x_{j+2}}_{s_2 \ times}, \dots,  \underbrace{x_j,\dots,x_j}_{s_k \ times}).
\end{displaymath}
In particular, $f^k(z) = f^{mk+k}(z) = z$. Thus $z =
\Phi(x_1,\dots,x_k) \in D(k)$. Since it is clear that $\Phi$ is
one-to one, it is left to show that $\Phi$ is onto.

Let $z\in D(k)$ which means that $f^{k}(z)=z$. It is enough to
show that $z_{i,h} = z_{i,g}$ for all $1 \leq h,g \leq s_{i_k}$
and $1 \leq i \leq k$. Suppose not. Without loss of generality,
let $z_{i,h} = 1 $ and $z_{i,g} = 0$. Since $k$ divides $c$,
$f^c(z) = z$. But there is a path from $v_{i,h}$ and $v_{i,g}$ of
length divides $c$, contradiction. Hence $\Phi$ is a bijection and
$|D(k)|=2^k$.
\end{proof}

\begin{corollary}
If $p$ is a prime number and $p^k$ divides $c$ for some $k \geq 1$, then
\begin{displaymath}
|A(p^k)| = 2^{p^k} - 2^{p^{k-1}}.
\end{displaymath}
\end{corollary}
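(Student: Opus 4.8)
The plan is to compute $|A(p^k)|$ by a difference of the quantities $D(m)$ for which the previous lemma already supplies an exact count, namely $|D(m)| = 2^m$ for every divisor $m$ of $c$. The structural point that makes this clean is that $p$ is prime, so the only divisors of $p^k$ are the powers $p^0, p^1, \dots, p^k$; the periods dividing $p^k$ therefore form a chain, and the set of periodic points whose period divides $p^k$ but \emph{not} $p^{k-1}$ is exactly $A(p^k)$.

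First I would recall that each periodic point has a unique period, so the sets $A(p^j)$ are pairwise disjoint. Combined with the definition $D(m) = \bigcup_{d \mid m} A(d)$, the primality of $p$ yields the disjoint decompositions
$$
D(p^k) = \bigsqcup_{j=0}^{k} A(p^j), \qquad D(p^{k-1}) = \bigsqcup_{j=0}^{k-1} A(p^j).
$$
Since $D(p^{k-1}) \subseteq D(p^k)$, subtracting gives $A(p^k) = D(p^k) \setminus D(p^{k-1})$, and hence
$$
|A(p^k)| = |D(p^k)| - |D(p^{k-1})|.
$$

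Next I would check that the previous lemma applies to both terms. The hypothesis $p^k \mid c$ shows that $p^k$ is a divisor of $c$, and consequently so is $p^{k-1}$; thus the lemma gives $|D(p^k)| = 2^{p^k}$ and $|D(p^{k-1})| = 2^{p^{k-1}}$, and substituting into the displayed difference yields the claimed formula $|A(p^k)| = 2^{p^k} - 2^{p^{k-1}}$.

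There is essentially no hard step here, since all the real work is already contained in the cardinality lemma $|D(m)| = 2^m$. The only subtlety worth flagging is that the argument genuinely uses the primality of $p$: it is precisely because the divisors of $p^k$ are totally ordered that $D(p^{k-1})$ captures \emph{all} periodic points whose period properly divides $p^k$ and nothing else, so that $D(p^k) \setminus D(p^{k-1})$ collapses to the single stratum $A(p^k)$. For a composite argument this telescoping fails, which is exactly why the corollary is stated only for prime powers.
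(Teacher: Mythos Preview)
Your proof is correct and follows essentially the same approach as the paper: both use the disjoint decomposition $D(p^k) = D(p^{k-1}) \uplus A(p^k)$ (which you justify carefully via the chain structure of divisors of a prime power) together with the preceding lemma $|D(m)| = 2^m$ to obtain the difference $2^{p^k} - 2^{p^{k-1}}$. Your write-up is simply more explicit about why primality is needed for this telescoping to isolate a single stratum.
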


\begin{proof}
It is clear that $|D(1)| = 2$. Namely, $(0,\dots,0)$ and $(1,\dots,1)$ are the only two fixed points.
Now if $p$ is prime and $k \geq 1$,  then the proof follows from the fact that
$D(p^k) = D(p^{k-1}) \biguplus A(p^k)$, where $\biguplus$ is the disjoint union.
\end{proof}

The following theorem gives the exact number of periodic points of
any possible length.
\begin{theorem}
Let $f$ be a conjunctive Boolean network whose dependency graph is
strongly connected and has loop number $c$.  If $c=1$, then $f$
has the two fixed points $(0,0,\ldots ,0)$ and $(1,1,\ldots ,1)$
and no other limit cycles of any length.  If $c>1$ and $m$ is a
divisor of $c$, then
\begin{equation}
|A(m)|= \sum_{i_1 = 0}^1 \cdots \sum_{i_r=0}^{1} (-1)^{i_1+i_2+\cdots+i_r} 2^{p_1^{k_1-i_1}p_2^{k_2-i_2} \dots p_r^{k_r-i_r}}, \label{Am}
\end{equation}
where
$m= \prod_{i=1}^r p_i^{k_i}$ is the prime factorization of $m$, that is
$p_1,\dots,p_r$ are distinct primes and $k_i\geq 1$ for all $i$.
\end{theorem}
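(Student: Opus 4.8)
The plan is to obtain $|A(m)|$ from the level-set counts $|D(k)| = 2^k$ established in the preceding lemma by Möbius inversion, equivalently by inclusion--exclusion. The starting point is the observation that, since every periodic point has a well-defined period dividing $c$, the set $D(k)$ decomposes as a disjoint union $D(k) = \biguplus_{m \mid k} A(m)$ over the divisors $m$ of $k$: a point lies in $D(k)$ precisely when its period divides $k$, and it then lands in the unique block $A(m)$ indexed by its actual period $m$.

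First I would dispose of the case $c = 1$. Here Lemma \ref{loop-period} gives $\p(f) \mid c = 1$, so every periodic point is a fixed point and $A(1) = D(1)$; the preceding lemma gives $|D(1)| = 2$, and one checks directly that the two fixed points are $(0,\dots,0)$ and $(1,\dots,1)$, since every coordinate function is a nonempty product of variables and hence fixes both of these vectors. This leaves no other limit cycles.

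For $c > 1$ and a divisor $m = \prod_{i=1}^r p_i^{k_i}$ of $c$, I would write $A(m)$ as the points of $D(m)$ excluded from every maximal sub-level,
\begin{equation*}
A(m) = D(m) \setminus \bigcup_{i=1}^r D(m/p_i),
\end{equation*}
which holds because a point of period dividing $m$ has period \emph{equal} to $m$ exactly when its period fails to divide any $m/p_i$. The key algebraic fact for running inclusion--exclusion is that $D(a) \cap D(b) = D(\gcd(a,b))$: a point lies in both iff its period divides each of $a,b$, hence iff it divides $\gcd(a,b)$. Since $\gcd(m/p_{i_1},\dots,m/p_{i_s}) = m/(p_{i_1}\cdots p_{i_s})$ for distinct primes, the intersection of the sets $D(m/p_i)$ over an index subset $S \subseteq \{1,\dots,r\}$ equals $D\!\left(m/\prod_{i\in S} p_i\right)$, whose exponent is $\prod_{i} p_i^{k_i - [i\in S]}$ and which, being a divisor of $m$ and hence of $c$, has cardinality $2^{\prod_i p_i^{k_i-[i\in S]}}$ by the preceding lemma.

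Carrying out inclusion--exclusion then yields
\begin{equation*}
|A(m)| = \sum_{S \subseteq \{1,\dots,r\}} (-1)^{|S|}\, 2^{\prod_{i} p_i^{k_i - [i\in S]}},
\end{equation*}
which is exactly \eqref{Am} after writing $i_j = [\,j \in S\,] \in \{0,1\}$ and $|S| = i_1 + \cdots + i_r$. (As a sanity check, for $m = p^k$ only $S = \emptyset$ and $S = \{1\}$ contribute, recovering $|A(p^k)| = 2^{p^k} - 2^{p^{k-1}}$ of the corollary.) The argument is essentially bookkeeping once $|D(k)| = 2^k$ is in hand; the only point demanding care is the reduction of intersections via $D(a)\cap D(b) = D(\gcd(a,b))$ together with $\gcd(m/p,m/q) = m/(pq)$, which is what guarantees that every surviving inclusion--exclusion term is again one of the sets $D(\cdot)$ whose size the lemma controls. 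I expect no genuine obstacle beyond this combinatorial accounting.
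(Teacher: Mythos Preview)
Your proof is correct and follows essentially the same route as the paper: write $A(m) = D(m)\setminus\bigcup_{i=1}^r D(m/p_i)$ and apply inclusion--exclusion using $|D(k)|=2^k$. You are in fact a bit more explicit than the paper in spelling out the key intersection identity $D(a)\cap D(b)=D(\gcd(a,b))$, which the paper uses tacitly when invoking inclusion--exclusion.
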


\begin{proof}
The statement for $c=1$ is part of the previous corollary.
Now suppose that $c>1$. For $1 \leq j \leq r$, let $m_j =
p_j^{k_j-1}\prod_{i=1, i\neq j}^r p_i^{k_i}$. Then $D(m) = A(m)
\biguplus (\bigcup_{j=1}^r D(m_j)) $, where $\biguplus$ is a
disjoint union, in particular,
\begin{displaymath}
A(m) = D(m) \setminus \bigcup_{j=1}^r  D(m_j).
\end{displaymath}
The formula \ref{Am} follows from the inclusion-exclusion principle and the disjoint union above.
\end{proof}

\begin{corollary}
If $m$ divides $c$, then the number of cycles of length $m$ in the phase space of $f$ is
$C(f)_m = \dfrac{|A(m)|}{m}$. Hence the cycle structure of $f$ is
\begin{equation*}
\cc(f) = \sum_{m \mid c} \dfrac{|A(m)|}{m} \cc_m.
\end{equation*}
\end{corollary}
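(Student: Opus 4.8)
The plan is to establish the bijective bookkeeping identity $|A(m)| = m\cdot C(f)_m$ by showing that the set $A(m)$ of periodic points of period exactly $m$ is partitioned into the length-$m$ limit cycles, each of which contributes exactly $m$ points. Dividing then gives $C(f)_m = |A(m)|/m$, and the generating function follows by summing over the admissible lengths.

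First I would fix a limit cycle $\cc_m = \{\uu, f(\uu), \dots, f^{m-1}(\uu)\}$ of length $m$ and verify two things about its point set. Its $m$ listed points are pairwise distinct: if $f^i(\uu) = f^j(\uu)$ with $0 \leq i < j < m$, then by part (\ref{item3}) of the first lemma of this section $m$ would divide $j-i$, which is impossible since $0 < j-i < m$. Moreover each of these points has period exactly $m$, by part (\ref{item2}) of that same lemma, which states that the period of $f^j(\uu)$ equals the period of $\uu$. Hence every length-$m$ cycle is a subset of $A(m)$ of cardinality exactly $m$.

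Next I would argue the converse and the disjointness, so that the cycles of length $m$ genuinely tile $A(m)$. Any $\uu \in A(m)$ has $f^m(\uu) = \uu$ with $m$ minimal, so its orbit is precisely a limit cycle of length $m$; thus every point of $A(m)$ lies on some length-$m$ cycle. Two distinct such cycles are disjoint, since if they shared a point $\vv$ then both would equal the forward orbit $\{\vv, f(\vv), \dots\}$ of that single periodic point and hence coincide. Counting points two ways then yields $|A(m)| = m\,C(f)_m$, i.e. $C(f)_m = |A(m)|/m$.

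The final formula is then immediate: by Lemma \ref{loop-period} we have $C(f)_i = 0$ whenever $i \nmid c$, so Equation (\ref{cyc-struct}) collapses to a sum over divisors of $c$, and substituting $C(f)_m = |A(m)|/m$ gives
\begin{equation*}
\cc(f) = \sum_{m \mid c} \frac{|A(m)|}{m}\,\cc_m.
\end{equation*}
I do not expect a genuine obstacle here; the content is an orbit-counting argument and the only step requiring care is the verification that the $m$ points of a length-$m$ cycle are distinct and all of period exactly $m$, which is exactly where parts (\ref{item2}) and (\ref{item3}) of the first lemma are invoked.
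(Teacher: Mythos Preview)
Your proof is correct and complete. The paper itself states this corollary without proof, treating the identity $C(f)_m = |A(m)|/m$ as immediate from the definitions; your argument simply spells out the standard orbit-counting that the paper leaves implicit, and does so correctly using the earlier lemma on periodic points.
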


\begin{remark}
Notice that the cycle structure of $f$ depends on its loop
number only. In particular, a conjunctive Boolean network with
loop number 1 on a strongly connected dependency graph only
has as limit cycles the two fixed points $(0,0,\ldots ,0)$ and $(1,1,\ldots ,1)$, regardless
of how many vertices its dependency graph has.
\end{remark}

\section{Networks with general dependency graph}

Let $f : \ff_2^n \longrightarrow \ff_2^n$ be a conjunctive Boolean
network with dependency graph $\dd(f)$. Without loss of
generality, the adjacency matrix of $\dd(f)$ is in the form~(\ref{frob}).
Let $G_1,\dots,G_t$ be the strongly connected
components of $\dd(f)$ corresponding to the matrices
$A_1,\dots,A_t$, respectively. Furthermore, suppose that none of
the $G_i$ is trivial (i.e., $A_i$ is the zero matrix). For $1 \leq
i \leq t$, let $h_i$ be the conjunctive Boolean network that has
$G_i$ as its dependency graph and suppose that the loop number of
$h_i$ is $l_i$. In particular, the loop number of $f$ is $l :=
\lcm\{l_1,\dots,l_t\}$.

In the remainder of the paper, we present lower and upper bounds
for the number of cycles of a given length. We use the strongly
connected components of the dependency graph and their poset to
infer the cycle structure of $f$.

Let $G_1$ and $G_2$ be two strongly connected components in
$\dd(f)$ and suppose $G_1 \preceq G_2$. Furthermore, assume the vertex set of
$G_1$ (resp. $G_2$) is $\{x_{i_1},\dots, x_{i_s}\}$ (resp.
$\{x_{j_1},\dots, x_{j_t}\}$). Without loss of generality, let
$x_{i_1} \longrightarrow x_{j_1}$ be a directed edge in $\dd(f)$
and let $\dd'$ be the graph $\dd(f)$ after deleting the edge
$(x_{i_1},x_{j_1})$. Let $g$ be the conjunctive Boolean network
such that $\dd(g) = \dd'$.

\begin{theorem}\label{theorem:f-h}
Any cycle in the phase space of $f$ is a cycle in the
phase space of $g$. In particular $\cc(f) \leq \cc(g)$ componentwise.
\end{theorem}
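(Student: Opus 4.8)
The plan is to prove the stronger pointwise statement that $f$ and $g$ agree on every $f$-periodic point. A limit cycle of $f$ is a set of $f$-periodic points that is closed under $f$, so once $g=f$ holds on all such points, $g$ permutes each $f$-cycle exactly as $f$ does and every $f$-cycle \emph{is} a $g$-cycle of the same length. Distinct $f$-cycles are disjoint sets of states, hence remain distinct $g$-cycles, and therefore $C(f)_m\le C(g)_m$ for every $m$, i.e. $\cc(f)\le\cc(g)$ componentwise. So the whole theorem follows from the claim that $g(\vv)=f(\vv)$ for every $f$-periodic point $\vv$.

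First I would record the only place where $f$ and $g$ can disagree. By construction $g_m=f_m$ for all $m\neq j_1$, while $g_{j_1}$ is $f_{j_1}$ with the single factor $x_{i_1}$ deleted. Writing $N'$ for the set of in-neighbors of $j_1$ other than $i_1$, we have $f_{j_1}=x_{i_1}\wedge\bigwedge_{k\in N'}x_k$ and $g_{j_1}=\bigwedge_{k\in N'}x_k$, so the two values can differ at a point $\vv$ only when $\vv_{i_1}=0$ while $\bigwedge_{k\in N'}\vv_k=1$. Thus the theorem reduces to the combinatorial claim: \emph{if $\vv$ is $f$-periodic and $\vv_{i_1}=0$, then $\vv_{k_0}=0$ for some $k_0\in N'$}, because then $g_{j_1}(\vv)=0=f_{j_1}(\vv)$ (and the case $\vv_{i_1}=1$ is trivial).

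To prove this I would use the strongly connected structure of $G_2$ together with the matrix description of iterates. Since $j_1$ lies in the nontrivial component $G_2$, it lies on a directed cycle inside $G_2$ and so has an in-neighbor $k_0\in G_2$; here $k_0\neq i_1$ because $i_1\in G_1$ and $G_1\neq G_2$, so $k_0\in N'$. Routing inside $G_2$, the walk $i_1\to j_1\to\cdots\to k_0$ that enters $G_2$ through $j_1$ can be taken to have length equal to any sufficiently large multiple of the loop number $l_2$ of $G_2$: the closed-walk lengths at $j_1$ in $G_2$ contain all sufficiently large multiples of $l_2$, and appending the edge $k_0\to j_1$ identifies these with the $i_1\to k_0$ walk lengths. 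Letting $p$ be the period of $\vv$, I would choose such a walk of length $L$ with $p\mid L$, for instance a large multiple of $\lcm\{l_2,p\}$. The finish uses the Corollary that $A^L$ is the adjacency matrix of $f^L$, which in dependency-graph terms says $(f^L)_{k_0}(\vv)=\bigwedge\{\vv_a:\text{there is a directed path of length }L\text{ from }a\text{ to }k_0\}$. Since such a path from $i_1$ to $k_0$ exists, $\vv_{i_1}$ appears in this conjunction, so $(f^L)_{k_0}(\vv)\le\vv_{i_1}=0$; and since $p\mid L$ and $\vv$ is periodic of period $p$, we have $(f^L)_{k_0}(\vv)=\vv_{k_0}$. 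Hence $\vv_{k_0}=0$ with $k_0\in N'$, which is exactly the reduced claim.

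I expect the main obstacle to be the timing hidden in the previous paragraph: the zero forced at $i_1$ re-enters $G_2$ through $j_1$ and must be routed back onto an in-neighbor of $j_1$ at a moment congruent to the current one modulo $p$. The structural fact that makes this possible is that $G_2$ is strongly connected with loop number $l_2$, so the walk lengths from $i_1$ into $k_0$ form a cofinite set of multiples of $l_2$ and therefore contain a multiple of $p$; combined with $f^L(\vv)=\vv$ this transfers the zero back to the original point. Note that distinctness of $G_1$ and $G_2$ is used only to guarantee $k_0\neq i_1$, so that the deleted edge is genuinely redundant on periodic points; no further hypothesis on the poset of components is needed.
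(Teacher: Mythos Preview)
Your proof is correct and follows essentially the same approach as the paper: both reduce to showing that if $\vv$ is $f$-periodic with $\vv_{i_1}=0$ then some in-neighbor of $j_1$ inside $G_2$ also carries a $0$, and both do this by exhibiting a walk from $i_1$ to that in-neighbor whose length is a multiple of the period, then invoking $f^L(\vv)=\vv$. The one small difference is that the paper first loops inside $G_1$ (using its loop number $a$) before crossing the edge $i_1\to j_1$ and then routes inside $G_2$ (loop number $b$), obtaining walks of length $q(a+b)$; you go straight across the edge and route only inside $G_2$, obtaining walks of length any large multiple of $l_2$. Your version is marginally cleaner since it shows the loop structure of $G_1$ is not actually needed, but this is a cosmetic simplification rather than a different argument.
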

\begin{proof}
Let $\mathcal{C} := \{\uu,f(\uu),\dots,f^{m-1}(\uu)\}$ be a cycle
of length $m$ in $\s(f)$. To show that $\mathcal{C}$ is a cycle in
$\s(g)$, it is enough to show that, whenever $x_{i_1}$-value in
$\uu$ is $0$, there exists $x_{j_w} \in G_2$, such that there is
an edge from $x_{j_w}$ to $x_{j_1}$ and the $x_{j_w}$-value in
$\uu$ is $0$. Thus, the value of $x_{j_1}$ is determined already
by the value of $x_{j_w}$ and the edge $(x_{i-1}, x_{j_1})$ does
not make a difference here and hence $\cc$ is a cycle in $\s(g)$.

Suppose the loop number of $G_1$ (resp. $G_2$) is $a$ (resp. $b$).
Now, any path from $x_{i_1}$ (resp. $y_{j_1}$) to itself is of length $pa$
(resp. $qb$) where $q, p \geq T$ and $T$ is large enough, see
\cite[Corollary 4.4]{CLP}. Thus there is a path from $x_{i_1}$ to
$x_{j_1}$ of length $qa+1$ for any $q \geq T$. Also, there is a
directed path from $x_{j_1}$ to $x_{j_w}$ of length $qb-1$ for any $q \geq
T$. This implies the existence of an edge from $x_{i_1}$ to $x_{j_w}$ of
length $q(a+b)$ for all $q \geq T$.

Now $\uu = f^{m}(\uu) = f^{mk}(\uu)$, for all $k \geq 1$.
Choose $q,k$ large enough such that $q(a+b) = km$.
Then
the value of $x_{j_w}$ in $f^{q(a+b)}(\uu)$ is equal to zero,
since there is a path from $x_{i_1}$ to $x_{j_w}$ of length
$q(a+b)$ and the value of $x_{i_1}$ is zero.
Therefore, the value of $x_{j_w}$ in $\uu$ is zero, since
$\uu = f^{mk}(\uu)= f^{q(a+b)}(\uu)$.
\end{proof}

\begin{corollary}\label{upperbound-g}
Let $f$ and $\dd(f)$ be as above. For any two strongly connected
components in $\dd(f)$ that are connected, drop all but one of the
edges. Let $\dd'$ be the new graph and let $g$ be the conjunctive
Boolean network such that $\dd(g) = \dd'$. Then any cycle in
$\s(f)$ is a cycle in $\s(g)$. In particular $\cc(f) \leq
\cc(g)$ componentwise.
\end{corollary}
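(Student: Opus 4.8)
The plan is to derive Corollary \ref{upperbound-g} from Theorem \ref{theorem:f-h} by iterating the single-edge deletion argument. Theorem \ref{theorem:f-h} already establishes that deleting one edge between two connected strongly connected components does not destroy any cycle of the phase space: every cycle of $\s(f)$ remains a cycle in $\s(g)$, where $g$ is the network whose dependency graph is $\dd(f)$ with that one edge removed. The corollary asks for the same conclusion when, for \emph{every} pair of connected strongly connected components, we drop all but one of the edges between them. So the essential idea is simply to peel off the superfluous edges one at a time, applying the theorem at each step and composing the resulting inclusions of cycle sets.

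First I would set up the induction carefully. Fix two strongly connected components $G_i \preceq G_j$ that are joined by more than one edge, say by edges $e_1, \dots, e_m$ with $m \geq 2$. I would delete $e_1$ first, obtaining an intermediate graph and its associated conjunctive network $f^{(1)}$; by Theorem \ref{theorem:f-h}, every cycle of $\s(f)$ is a cycle of $\s(f^{(1)})$. The key point that makes the iteration legal is that $G_i$ and $G_j$ are still strongly connected components of the new graph and are still connected to each other (since $m-1 \geq 1$ edges remain), so the hypotheses of Theorem \ref{theorem:f-h} are satisfied again and I may delete $e_2$, producing $f^{(2)}$ with $\s(f^{(1)})$-cycles preserved, and so on. After deleting $e_1,\dots,e_{m-1}$ I am left with exactly one edge between this pair, and transitivity of ``every cycle is preserved'' gives that every cycle of $\s(f)$ is a cycle of the final network's phase space. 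Ranging over all pairs of connected components and accumulating these inclusions yields the graph $\dd'$ and network $g$ of the statement, with every cycle of $\s(f)$ a cycle of $\s(g)$.

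The componentwise inequality $\cc(f) \leq \cc(g)$ then follows immediately: since the cycle structures count cycles length by length via $C(\cdot)_i$ as in Equation (\ref{cyc-struct}), and each limit cycle of $\s(f)$ is literally a limit cycle of $\s(g)$ of the same length, we have $C(f)_i \leq C(g)_i$ for every $i$. (Distinct cycles of $\s(f)$ remain distinct in $\s(g)$ because they are determined as sets of states, so no double-counting can occur.)

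The step I expect to require the most care is verifying that deleting edges does not prematurely merge or alter the strongly connected components, so that Theorem \ref{theorem:f-h} applies at \emph{each} stage of the iteration. Deleting an inter-component edge never changes the internal structure of any $G_i$ (those are determined by intra-component paths, which are untouched), and it never creates new strong connectivity; the only thing to confirm is that the two components under consideration remain \emph{connected} in the poset sense until the last edge is left in place, which holds precisely because we stop deleting once a single edge survives. Once this bookkeeping is in place, the rest is a routine transitive composition of the inclusions supplied by Theorem \ref{theorem:f-h}.
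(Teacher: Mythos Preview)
Your proposal is correct and follows exactly the approach the paper intends: the corollary is stated without proof because it is meant to follow immediately by iterating Theorem~\ref{theorem:f-h}, deleting one inter-component edge at a time. Your verification that the strongly connected component decomposition and the poset relation $G_i \preceq G_j$ are preserved at each intermediate step (since only inter-component edges are removed and at least one survives between each connected pair) is precisely the bookkeeping needed to make the iteration rigorous.
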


Notice that there are many different possible $g$ one can get and
each one of them has the same poset as $f$ and provides an upper
bound for the cycle structure of $f$. However, we do not have a
polynomial formula for the cycle structure of $g$.  When we delete
all edges between any two strongly connected components we get an
easy formula for the cycle structure for the corresponding system
as we describe below. In Section \ref{upper-bound-sec} we will
present an improved upper bound for the cycle structure of $f$.

Let $h :\ff_2^n \longrightarrow \ff_2^n$ be the conjunctive
Boolean network with the disjoint union of $G_1, \dots, G_t$ as
its dependency graph. That is, $h = (h_1,\dots,h_t)$. For $h$,
there are no edges between any two strongly connected components
of the dependency graph of the network. Its cycle structure can be
completely determined from the cycles structures of the $h_i$
alone.

\begin{theorem}\label{disjoint}
Let $\cc(h_i) = \sum_{j|l_i} a_{i,j} \cc_j$ be the cycle structure of $h_i$. Then
the cycle structure of $h$ is $\cc(h)= \prod_{i=1}^t \cc(h_i)$ and the number
of cycles of length $m$ (where $m|l$) in the phase space of $h$ is
\begin{equation}\label{C_m:disjoint}
C(h)_m = \sum_{\substack{j_i | l_i \\ \lcm\{j_1,\dots,j_t\}=m}} \dfrac{j_1 \cdots j_t}{m} \prod_{i=1}^t a_{i,j_i}.
\end{equation}
\end{theorem}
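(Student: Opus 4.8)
The plan is to recognize $h$ as the $t$-fold product of the dynamical systems $h_1,\dots,h_t$ and then apply the product formula for cycle structures already established in Corollary \ref{cross-prod-cycles}. First I would observe that, because $\dd(h)$ is the disjoint union $G_1 \sqcup \cdots \sqcup G_t$, the variable blocks associated with distinct components never interact: writing $\ff_2^n = \ff_2^{n_1} \times \cdots \times \ff_2^{n_t}$ with $n_i = |V_i|$, the map $h$ factors as $h(\uu_1,\dots,\uu_t) = (h_1(\uu_1),\dots,h_t(\uu_t))$. Hence $h$ is exactly the product system of Theorem \ref{cross-prod}, iterated $t-1$ times. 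Applying that theorem repeatedly gives $\s(h) = \s(h_1) \times \cdots \times \s(h_t)$, and Corollary \ref{cross-prod-cycles} then yields, by induction on $t$, the factorization $\cc(h) = \prod_{i=1}^t \cc(h_i)$ together with the multiplication rule $\cc_s \cdot \cc_t = \gcd\{s,t\}\,\cc_{\lcm\{s,t\}}$.

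Next I would expand this product explicitly. Substituting $\cc(h_i) = \sum_{j_i \mid l_i} a_{i,j_i}\,\cc_{j_i}$ and distributing, every resulting term is of the form $\big(\prod_{i=1}^t a_{i,j_i}\big)\,\cc_{j_1}\cdots \cc_{j_t}$ indexed by a choice of divisors $j_i \mid l_i$. The key computation is a sub-induction on the monomials: using $\cc_s \cc_t = \gcd\{s,t\}\,\cc_{\lcm\{s,t\}}$ together with the elementary identity $\gcd\{L,j\} = Lj/\lcm\{L,j\}$, one verifies that $\cc_{j_1}\cdots \cc_{j_t} = \frac{j_1\cdots j_t}{\lcm\{j_1,\dots,j_t\}}\,\cc_{\lcm\{j_1,\dots,j_t\}}$. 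In the inductive step one multiplies the running partial product $\frac{j_1\cdots j_{t-1}}{L'}\,\cc_{L'}$, where $L' = \lcm\{j_1,\dots,j_{t-1}\}$, by $\cc_{j_t}$; the $\gcd$/$\lcm$ identity telescopes the coefficient into $\frac{j_1\cdots j_t}{\lcm\{j_1,\dots,j_t\}}$.

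Finally I would collect terms by cycle length. The monomial $\cc_{j_1}\cdots \cc_{j_t}$ contributes to $\cc_m$ precisely when $\lcm\{j_1,\dots,j_t\} = m$, so summing coefficients over all such divisor tuples gives exactly $C(h)_m = \sum_{\lcm\{j_1,\dots,j_t\}=m,\ j_i \mid l_i} \frac{j_1\cdots j_t}{m}\prod_{i=1}^t a_{i,j_i}$, which is Equation (\ref{C_m:disjoint}). Everything beyond the initial factorization is routine $\gcd$/$\lcm$ bookkeeping; the only point requiring genuine care — and the main obstacle, such as it is — is the inductive verification of the $t$-fold monomial product formula, where one must track correctly how the telescoping coefficient $\tfrac{j_1\cdots j_t}{\lcm}$ emerges from repeated applications of the binary rule $\cc_s\cc_t = \gcd\{s,t\}\,\cc_{\lcm\{s,t\}}$.
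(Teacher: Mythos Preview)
Your proposal is correct and follows essentially the same approach as the paper, which simply states that the result ``follows directly from Corollary \ref{cross-prod-cycles}.'' You have supplied the details the paper leaves implicit---the iterated application to $t$ factors and the telescoping $\gcd$/$\lcm$ computation yielding $\cc_{j_1}\cdots\cc_{j_t} = \frac{j_1\cdots j_t}{\lcm\{j_1,\dots,j_t\}}\cc_{\lcm\{j_1,\dots,j_t\}}$---but the underlying strategy is identical.
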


\begin{proof}
This follows directly from Corollary \ref{cross-prod-cycles}.
\end{proof}

\begin{corollary}\label{upperbound}
Let $f$ and $h$ be as above. The number of cycles
of any length in the phase space of $f$ is less than or equal to
the number of cycles of that length in the phase space of $h$.
That is $\cc(f) \leq \cc(h)$ componentwise.
\end{corollary}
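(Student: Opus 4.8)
The plan is to obtain $h$ from $f$ by a finite sequence of single inter-component edge deletions, applying Theorem \ref{theorem:f-h} at each step and chaining the resulting cycle inclusions. By construction $h$ is the conjunctive network whose dependency graph is $\dd(f)$ with every edge joining two \emph{distinct} strongly connected components removed, the edges lying inside some $G_i$ being left untouched; so it suffices to peel off the inter-component edges one at a time. Concretely, I would let $E$ be the finite set of edges of $\dd(f)$ that join two distinct components, say $|E| = N$, fix an enumeration $e_1,\dots,e_N$ of $E$, and define $f^{(r)}$ (for $0 \le r \le N$) to be the conjunctive network whose dependency graph is $\dd(f)$ with $e_1,\dots,e_r$ deleted, so that $f^{(0)} = f$ and $f^{(N)} = h$.

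For each $r$ the edge $e_{r+1}$ runs from a vertex of some component $G_a$ to a vertex of some component $G_b$ with $G_a \preceq G_b$ and $G_a \ne G_b$, so Theorem \ref{theorem:f-h} (taking $G_1 = G_a$, $G_2 = G_b$, $g = f^{(r+1)}$) shows that every cycle of $\s(f^{(r)})$ is a cycle of $\s(f^{(r+1)})$. Because ``is a cycle of'' is a containment between sets of periodic orbits, it is transitive; iterating over $r = 0,\dots,N-1$ gives that every cycle of $\s(f)$ is a cycle of $\s(h)$, and counting the cycles of each length $m$ yields $\cc(f) \le \cc(h)$ componentwise. Note that the argument needs only the single-edge statement and not the explicit formula of Theorem \ref{disjoint} for $\cc(h)$.

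The one point requiring care, which I expect to be the main obstacle rather than the counting, is to verify that the hypotheses of Theorem \ref{theorem:f-h} persist at every step. First, the components must be invariant under each deletion. An edge joining two distinct components lies on no directed cycle: a return path from $G_b$ to $G_a$ would force $G_b \preceq G_a$, hence $G_a = G_b$ by antisymmetry of $\mathcal{P}$, a contradiction. Deleting an edge that is on no cycle preserves every mutual-reachability relation, so the equivalence classes under $\sim$, the components $G_1,\dots,G_t$, and their poset are all unchanged; thus $e_{r+1}$ is still a cross-component edge of $\dd(f^{(r)})$ and the theorem applies verbatim. Second, each $f^{(r)}$ must have no constant coordinate, i.e.\ no source vertex. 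The head of $e_{r+1}$ lies in the non-trivial component $G_b$ and therefore already has an in-edge from within $G_b$; deleting the cross-component edge $e_{r+1}$ leaves this internal in-edge intact, so its in-degree stays positive. Hence every $f^{(r)}$ is a well-defined conjunctive network and the iteration goes through.
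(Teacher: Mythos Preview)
Your proof is correct and is essentially a careful write-up of the argument the paper leaves implicit: the corollary is stated without proof, and the intended reasoning is exactly to iterate Theorem~\ref{theorem:f-h}, deleting inter-component edges one by one until none remain, which yields $h$. Your verification that the strongly connected components and the no-source hypothesis are preserved at each step is a useful sanity check that the paper omits.
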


\vspace{.2cm}
\noindent
{\bf \emph{Example}} \ref{2-3} (Cont.).\label{2:3}
Here $h=(h_1,h_2)$, where $h_1(x_1,x_2) = (x_2,x_1)$ and
$h_2(x_3,x_4,x_5) = (x_5,x_3,x_4)$. It is easy to check that
$\cc(h_1) =2\cc_1+1\cc_2$, and
$\cc(h_2) =2\cc_1+2\cc_3$. By Theorem \ref{disjoint},
$\cc(h) = 4\cc_1+2\cc_2+4\cc_3+2\cc_6$. In particular, $\cc(f) \leq \cc(h)$.

\section{Bounds on the cycle structure}

Let $f$, $h$, $G_1,\dots,G_t$ be as above and let $l_i$ be the
loop number of $G_i$. Furthermore, let $\mathcal{P}$ be the poset
of the strongly connected components. Let $\Omega$ be the set of
all maximal antichains in $\mathcal{P}$. For $J \subseteq [t] =
\{1,\ldots ,t\}$, let $x_J := \prod_{j \in J} x_j$ and let
$\overline{J} := [t]\setminus J$.

\begin{definition}
Let $\mathcal{A}$ be a subset of the set of limit cycles of $h$
and let $s_i$ be the number of limit cycles of length $i$ in
$\mathcal{A}$. We define and denote  $\| \mathcal{A}\|$ by
\[
\| \mathcal{A}\| :=\sum_{i} {s_i \cc_i}.
\]
\end{definition}

\begin{remark}
Using the inclusion-exclusion principle,
if $\mathcal{A},\mathcal{A}_1,...,\mathcal{A}_s$ are subsets
of the set of limit cycles of $h$ and $\mathcal{A}=\bigcup_{i}^{s}\mathcal{A}_i$, then,
\[
\|\mathcal{A}\|=\sum_{J \subseteq [s]}(-1)^{|J|+1}{\|\bigcap_{j \in J} \mathcal{A}_j \|}.
\]
\end{remark}

\begin{definition}
For any subset $J \subseteq [t]$, let
\[
J^{\preceq} := \{k \, : \,  G_j \preceq G_k \mbox{ for some } j \in J\} , \,
J^{\succeq} := \{k \, : \,  G_j \succeq G_k \mbox{ for some } j \in J\},
\]
\[
J^{\prec} := \{k \, : \,G_j \prec G_k \mbox{ for some } j \in J\} \mbox{, and }
J^{\succ} := \{k \, : \,G_j \succ G_k \mbox{ for some } j \in J\}.
\]
A limit cycle $\cc$ in the phase space of $f$ is $J_0$ (resp.
$J_1$) if the $G_j$ component of $\cc$ is $0$ (resp. $1$) for all
$j \in J$.
\end{definition}
Notice that the sets defined above are closely related to upper
and lower order ideals in posets, see \cite[p. 100]{stanley}.


\vspace{.2cm} \noindent {\bf \emph{Example}} \ref{ex:run} (Cont.).
Let $J =\{2\}$.  Then
$J^{\preceq} = \{2,3\}, \,
J^{\succeq} = \{1, 2\}, \,
J^{\prec} = \{3\}, \,
\mbox{ and }
J^{\succ} = \{1\}$.

\begin{definition}
A limit cycle $\cc$ in $\s(h)$ is \emph{$J-$regular} if $\cc$ is both
$(J^{\prec})_0$ and $(J^{\succ})_1$, for some maximal antichain
$J$ in $\mathcal{P}$. A limit cycle is called \emph{regular} if it is
$J-$regular for some maximal antichain $J$. That is, an element of
a $J-$regular limit cycle has 0 in all entries corresponding to
strongly connected components lying above components in $J$ and 1
in all entries corresponding to components lying below components
in $J$.
\end{definition}

\begin{lemma}\label{lem:chj}
Let $\jj \subseteq \Omega$ be a set of maximal antichains.
For $J \in \jj$, let $A_J$ be the set of all $J-$regular
limit cycles in the phase space of $f$. Then,
\[
\| \bigcap_{J \in \jj}A_J \|=\prod_{j\in \bigcap_{J \in \jj} J }{\cc(h_j)}.
\]
\end{lemma}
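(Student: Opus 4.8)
The plan is to work inside $\s(h)$ via the product decomposition $\s(h)=\s(h_1)\times\cdots\times\s(h_t)$ of Theorem \ref{cross-prod}. Since by Corollary \ref{upperbound} every limit cycle of $f$ is already a limit cycle of $h$, I can read each $J$-regular cycle as a tuple $(\cc^{(1)},\dots,\cc^{(t)})$ whose $i$-th coordinate $\cc^{(i)}$ is a limit cycle of $h_i$, and use Corollary \ref{cross-prod-cycles} to translate products of phase spaces into products of generating functions under the rule $\cc_s\cdot\cc_t=\gcd\{s,t\}\,\cc_{\lcm\{s,t\}}$. In this language, being $J$-regular means $\cc^{(i)}$ is the all-zero fixed point for $i\in J^{\prec}$ and the all-one fixed point for $i\in J^{\succ}$, while the coordinates in $J$ are unconstrained. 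Each of the all-zero and all-one fixed points is a single cycle $\cc_1$, and $\cc_1$ is the multiplicative identity for the product above, so fixing a coordinate to $0$ or $1$ contributes only a factor $\cc_1$ and disappears from $\|\cdot\|$.

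First I would settle the single-antichain case $\jj=\{J\}$, which also fixes the combinatorial skeleton. Because $J$ is an antichain, no two of its elements are comparable, so $J\cap J^{\prec}=J\cap J^{\succ}=\varnothing$; because $J$ is maximal, any $k\notin J$ must be comparable to some element of $J$ (otherwise $J\cup\{k\}$ would be a strictly larger antichain), whence $k\in J^{\prec}\cup J^{\succ}$. Moreover a chain $G_a\prec G_k\prec G_b$ with $a,b\in J$ would force $G_a\prec G_b$, contradicting the antichain property, so $J^{\prec}\cap J^{\succ}=\varnothing$. Thus $[t]=J\sqcup J^{\prec}\sqcup J^{\succ}$, and reading off coordinates gives $\|A_J\|=\prod_{j\in J}\cc(h_j)$, which is the asserted formula when $|\jj|=1$.

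For general $\jj$ the intersection $\bigcap_{J\in\jj}A_J$ consists of the cycles whose $i$-th coordinate is forced to $0$ for $i\in P:=\bigcup_{J\in\jj}J^{\prec}$, forced to $1$ for $i\in S:=\bigcup_{J\in\jj}J^{\succ}$, and free otherwise. The decisive combinatorial step is to identify the free coordinate set with $\bigcap_{J\in\jj}J$: an index $k$ lies outside $P\cup S$ exactly when, for every $J\in\jj$, it is neither strictly above nor strictly below any element of $J$, and by maximality of each $J$ this forces $k\in J$ for all $J$, i.e.\ $k\in\bigcap_{J\in\jj}J$; the reverse inclusion is immediate from the antichain property. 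Granting that the forcing is consistent, one lets the coordinates in $\bigcap_{J\in\jj}J$ range over all of $\s(h_j)$ and collapses the fixed $0/1$ coordinates via $\cc_1\cdot\cc_m=\cc_m$, so that Corollary \ref{cross-prod-cycles} delivers $\|\bigcap_{J\in\jj}A_J\|=\prod_{j\in\bigcap_{J\in\jj}J}\cc(h_j)$.

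The step I expect to be the genuine obstacle is precisely the consistency invoked above: one must rule out an index $k\in P\cap S$, that is, a component that one antichain in $\jj$ places strictly above an element $G_a$ and another places strictly below an element $G_b$, giving $G_a\prec G_k\prec G_b$; such a $k$ would be forced simultaneously to $0$ and to $1$, and the product formula could not hold. This is where the poset structure together with the propagation phenomenon of Theorem \ref{theorem:f-h} — that in an $f$-cycle a $0$ upstream forces a $0$ downstream, and dually a $1$ downstream forces a $1$ upstream — must be used to show that $P$ and $S$ cannot meet for the families $\jj$ at hand. I would attempt this by tracing a hypothetical chain $G_a\prec G_k\prec G_b$ through the regularity constraints imposed on a single common cycle and deriving a contradiction on the value of $k$. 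Once consistency is established, the two set-identities above and the bookkeeping of Corollary \ref{cross-prod-cycles} complete the proof.
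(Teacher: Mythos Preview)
Your setup via the product decomposition $\s(h)=\prod_i\s(h_i)$ is exactly right and matches the paper's one-line argument, which simply asserts that $\cc\in\bigcap_{J\in\jj}A_J$ if and only if the non-trivial coordinates of $\cc$ lie in $\bigcap_{J\in\jj}J$. Your single-antichain case and the identification $[t]\setminus(P\cup S)=\bigcap_{J\in\jj}J$ are both correct, and you go further than the paper by explicitly isolating the consistency question $P\cap S\stackrel{?}{=}\emptyset$, which the paper does not address at all.

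The gap is in your proposed resolution: $P\cap S=\emptyset$ cannot be proved, because it is false in general. Take the chain $G_1\prec G_2\prec G_3$, so $\Omega=\{\{1\},\{2\},\{3\}\}$, and let $\jj=\{\{1\},\{3\}\}$. Then $\{1\}^{\prec}=\{2,3\}$ and $\{3\}^{\succ}=\{1,2\}$, whence $2\in P\cap S$. Theorem~\ref{theorem:f-h} concerns propagation of values along edges of $\dd(f)$ inside a single limit cycle; it has no leverage on this purely order-theoretic configuration in $\mathcal{P}$. In this example $A_{\{1\}}\cap A_{\{3\}}=\emptyset$ (coordinate $2$ is forced to both $0$ and $1$), while the right-hand side is the empty product $\cc_1$, so the asserted equality actually fails for this $\jj$. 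Adjoining an incomparable $G_4$ gives $\jj=\{\{1,4\},\{3,4\}\}$ with $\bigcap_{J\in\jj}J=\{4\}$, intersection still empty, right-hand side now $\cc(h_4)$. The obstacle you located is therefore real and not removable by the route you suggest; the identity holds precisely when $P\cap S=\emptyset$, and the paper's downstream inclusion--exclusion comes out correctly only because the overcounts at inconsistent $\jj$ cancel in the alternating sum.
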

\begin{proof}
It is easy to see that a regular limit cycle $\cc \in \bigcap_{J \in \jj}A_J$
if and only if the only non trivial components of $\cc$ are in $j\in \bigcap_{J \in \jj} J$.
\end{proof}

\section{A Lower Bound}

It is easy to see that a regular cycle is actually a limit cycle
in the phase space of $f$, and hence the numbers of regular cycles
of different length provide lower bounds for the number of limit
cycles of different lengths in the phase space of $f$. Next we
express these numbers in terms of the cycle structure $\cc(h_i)$ of
the strongly connected components.

\begin{lemma}
The cycle structure of the regular limit cycles in $\s(h)$ is
\begin{equation}
\sum_{\jj \subseteq \Omega} (-1)^{|\jj|+1} \prod_{j\in \bigcap_{J \in \jj} J}{\cc(h_j)}. \label{lower-poly}
\end{equation}
\end{lemma}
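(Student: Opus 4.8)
The plan is to compute the cycle structure of the set of all regular limit cycles by writing that set as a union over maximal antichains and applying the inclusion-exclusion principle already recorded in the Remark following the definition of $\|\mathcal{A}\|$. Let $A_J$ denote the set of $J$-regular limit cycles in $\s(h)$, as in Lemma \ref{lem:chj}. By definition a limit cycle is regular precisely when it is $J$-regular for some maximal antichain $J \in \Omega$, so the set $\mathcal{A}$ of all regular limit cycles satisfies $\mathcal{A} = \bigcup_{J \in \Omega} A_J$. The whole point is that the $A_J$ overlap (a single cycle can be $J$-regular for several antichains), so the count is not simply $\sum_{J} \|A_J\|$; the overlaps are exactly what inclusion-exclusion corrects for.

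First I would invoke the inclusion-exclusion formula from the Remark with the indexing collection being $\{A_J : J \in \Omega\}$. This immediately gives
\[
\|\mathcal{A}\| = \sum_{\jj \subseteq \Omega} (-1)^{|\jj|+1} \Bigl\| \bigcap_{J \in \jj} A_J \Bigr\|,
\]
where the sum ranges over nonempty subcollections $\jj$ of $\Omega$. Second, I would substitute Lemma \ref{lem:chj}, which evaluates each intersection term: $\| \bigcap_{J \in \jj} A_J \| = \prod_{j \in \bigcap_{J \in \jj} J} \cc(h_j)$. Combining these two steps yields exactly the claimed expression \eqref{lower-poly}. The argument is therefore essentially a direct assembly of two results already established in the excerpt, so there is little genuine computation to carry out.

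The main obstacle, such as it is, lies not in the algebra but in checking that the hypotheses of the inclusion-exclusion Remark genuinely apply here. Specifically I would verify that the $A_J$ are indeed subsets of a common finite set (the limit cycles of $h$), that $\mathcal{A}$ really is their union and not something larger or smaller, and that the empty-intersection convention in Lemma \ref{lem:chj} is handled consistently with how the Remark's formula treats nonempty $\jj$. One subtlety to confirm is whether a regular cycle of $f$ is being counted as a cycle of $h$: since Theorem \ref{theorem:f-h} and Corollary \ref{upperbound} identify cycles of $f$ with cycles of $h$ componentwise, and Lemma \ref{lem:chj} phrases $A_J$ both as $J$-regular cycles in $\s(f)$ and computes them via $\cc(h_j)$, I would make sure the identification is used uniformly. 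Once these bookkeeping points are settled, the equality \eqref{lower-poly} follows with no further work.
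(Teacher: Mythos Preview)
Your proposal is correct and follows exactly the same route as the paper: write the set of regular limit cycles as $\bigcup_{J\in\Omega} A_J$, apply the inclusion--exclusion Remark to obtain $\sum_{\jj\subseteq\Omega}(-1)^{|\jj|+1}\|\bigcap_{J\in\jj}A_J\|$, and then invoke Lemma~\ref{lem:chj} to evaluate each intersection. The bookkeeping concerns you raise (about $\s(f)$ versus $\s(h)$) reflect a genuine wording inconsistency in the paper rather than a flaw in the argument; the cycles in question live in $\s(h)$ throughout.
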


\begin{proof}
By definition, the set of regular limit cycles is
$
\bigcup_{J\in\Omega}{A_J}.
$
Now, using the inclusion-exclusion principle and Lemma \ref{lem:chj},
it follows that
\begin{align*}
\bigcup_{J\in\Omega}{A_J} &=
\sum_{\mathcal{J}\subseteq \Omega}(-1)^{|\jj|+1} {\| \bigcap_{J \in \jj}A_J \|} \\
 &= \sum_{\jj \subseteq \Omega}
(-1)^{|\jj|+1} \prod_{j\in \bigcap_{J \in \jj} J}{\cc(h_j)}.
\end{align*}
\end{proof}

\begin{theorem}
Consider the function
\begin{equation*}
\mathcal{L}(x_1,\dots,x_t) := \sum_{\jj
\subseteq \Omega} (-1)^{|\jj|+1} \prod_{j\in \bigcap_{J \in \jj}
J} x_i.
\end{equation*}
 Then, for any conjunctive Boolean network $f$ with
subnetworks $h_1,\dots, h_t$ and $\Omega$ it's set of maximal
antichains in the poset of $f$, we have
\begin{equation}
\mathcal{L}(\cc(h_1),\dots,\cc(h_t)) \leq \cc(f). \label{lower-ineq}
\end{equation}
Here, the function $\mathcal{L}$ is evaluated using the
``multiplication" described in Corollary \ref{cross-prod-cycles}.
This inequality provides a sharp lower bound on the number of
limit cycles of $f$ of a given length.
\end{theorem}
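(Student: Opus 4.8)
The plan is to show that the polynomial $\mathcal{L}(\cc(h_1),\dots,\cc(h_t))$ enumerates precisely the \emph{regular} limit cycles, and that every regular cycle is itself an honest limit cycle of $f$; the asserted inequality then says merely that a sub-collection of the cycles of $f$ cannot outnumber all of them. By the preceding lemma, evaluating $\mathcal{L}$ at $x_j = \cc(h_j)$ with the multiplication of Corollary \ref{cross-prod-cycles} yields exactly the cycle structure of the set $\bigcup_{J \in \Omega} A_J$ of regular limit cycles in $\s(h)$. Hence it suffices to prove (i) every regular limit cycle of $\s(h)$ is a limit cycle of $\s(f)$ of the same length, and (ii) this assignment is injective. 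Granting these, the number of regular cycles of each length $m$ is at most $C(f)_m$, which is the desired componentwise bound $\mathcal{L}(\cc(h_1),\dots,\cc(h_t)) \leq \cc(f)$.

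The heart of the matter is (i), which I would establish by verifying that $f$ and $h$ act identically on any state lying on a regular cycle. Recall that $f$ differs from $h$ only by the cross-component edges, and that in the Frobenius normal form these run strictly upward in the poset: an incoming cross-edge to a node $x_b \in G_j$ can only originate in a component $G_i$ with $G_i \prec G_j$. Writing the conjunctive coordinate functions, $f_b = h_b \wedge c_b$, where $c_b$ is the AND of the cross-inputs to $b$. Fix a maximal antichain $J$ and a $J$-regular cycle; by definition every component in $J^{\prec}$ sits at its all-$0$ fixed point and every component in $J^{\succ}$ sits at its all-$1$ fixed point throughout the cycle. I would then split on the position of $G_j$ relative to $J$: (a) if $G_j \in J^{\prec}$, then $h_b = 0$ (all-$0$ is a fixed point of $h_j$), so $f_b = 0 \wedge c_b = 0 = h_b$; (b) if $G_j \in J^{\succ}$, its cross-inputs come from $G_i \prec G_j$, which lie in $J^{\succ}$ as well and hence are at all-$1$, so $c_b = 1$ and $f_b = h_b$; (c) if $G_j \in J$, its cross-inputs again come from $G_i \prec G_j \in J$, which are in $J^{\succ}$ and thus at all-$1$, so once more $c_b = 1$ and $f_b = h_b$. \emph{Maximality} of $J$ guarantees these are the only cases: no component can be incomparable to all of $J$, and the antichain property forbids a component from being simultaneously above and below $J$. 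Thus $f_b = h_b$ at every coordinate and at every state of the cycle, so $f^i(\uu) = h^i(\uu)$ along the orbit by induction, and the regular cycle is a limit cycle of $f$ of the same length.

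For (ii), distinct regular cycles are disjoint orbits in $\ff_2^n$, and since $f$ permutes each exactly as $h$ does, they remain distinct disjoint cycles in $\s(f)$; the assignment is therefore injective and length-preserving, completing the inequality. For \emph{sharpness} I would exhibit the extreme case in which $\mathcal{P}$ is an antichain: then there are no cross-component edges, so $f = h$, while the unique maximal antichain is $J = [t]$, giving $\Omega = \{[t]\}$ and $\mathcal{L}(\cc(h_1),\dots,\cc(h_t)) = \prod_{j \in [t]} \cc(h_j) = \cc(h) = \cc(f)$; equality holds, so the bound cannot be improved in general. The main obstacle is the case analysis in (i): one must carefully invoke the upward orientation of cross-edges together with the maximality of $J$ to be certain that the extra AND-factors $c_b$ never flip a $1$ to a $0$ on a regular state.
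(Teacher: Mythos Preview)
Your argument for the inequality is correct and in fact fills in the details that the paper leaves implicit (the paper simply cites the preceding lemma together with the earlier remark ``it is easy to see that a regular cycle is actually a limit cycle in the phase space of $f$''). Your case analysis showing $f_b=h_b$ on every state of a $J$-regular cycle is sound: maximality of $J$ forces every component to lie in exactly one of $J$, $J^{\prec}$, $J^{\succ}$, and in each case the extra AND-factor $c_b$ is harmless.

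The gap is in your sharpness argument. Exhibiting equality only when $\mathcal{P}$ is an antichain does not show the bound is sharp in the intended sense: the polynomial $\mathcal{L}$ depends on the poset $\mathcal{P}$ (through $\Omega$) and on the $\cc(h_j)$, so sharpness here means that for \emph{each} choice of strongly connected pieces $h_1,\dots,h_t$ and poset $\mathcal{P}$, there is a conjunctive network $f$ with exactly those components and that poset for which $\cc(f)=\mathcal{L}(\cc(h_1),\dots,\cc(h_t))$. Your antichain example addresses only the trivial poset. The paper establishes the general statement by the opposite extreme: take $f$ so that whenever $G_i \preceq G_j$ in $\mathcal{P}$ there is an edge from \emph{every} vertex of $G_i$ to \emph{every} vertex of $G_j$. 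One then checks that every limit cycle of this $f$ is regular: if the projection $\cc^j$ onto some component $G_j$ is not the all-$0$ or all-$1$ fixed point, then the complete cross-wiring forces every $G_i \prec G_j$ to sit at all-$1$ (otherwise a $0$ would propagate into every coordinate of $G_j$) and every $G_k \succ G_j$ to sit at all-$0$; the set of such ``nontrivial'' $G_j$ is then an antichain, and extending it to a maximal antichain $J$ exhibits $\cc$ as $J$-regular. This is the missing construction you need to complete sharpness.
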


\begin{proof}
The inequality follows from the previous lemma.
Now to show that this lower bound is sharp, it is enough to present
a Boolean monomial dynamical system $f$ such that
$\cc(f) = \mathcal{L}(\cc(h_1),\dots,\cc(h_t))$.

Let $f$ be such that whenever there is a directed edge from $G_i$ to $G_j$
in the poset $\pp$,
there is a directed edge from every vertex in $G_i$ to every vertex in $G_j$
in the dependency graph of $f$. It is enough to check that any cycle in the phase
space of $f$ is regular. Let $\cc$ be a cycle in the phase space of $f$. Let
$\cc^j$ be the projection of $\cc$ on the strongly connected
component $G_j$. It is clear that $\cc^j$ is cycle in the $\cc(h_j)$.
Suppose that $\cc^j$ is non trivial. Now if $G_i \preceq G_j$, then
all entries corresponding to $G_i$ must be one.
On the other hand, if $G_i \succeq G_j$, then
all entries corresponding to $G_i$ must be zero.
Therefore, $\cc$ is regular.
\end{proof}

Note that the left side of the inequality (\ref{lower-ineq}) is a
polynomial function in the variables $\cc(h_i)$, with coefficients
that are functions of the cycle numbers of various lengths of the
$h_i$ and the anti-chains of the poset of strongly connected
components.  That is, the sharp lower bound is a polynomial
function depending exclusively on measures of the network
topology.
\section{An Upper Bound} \label{upper-bound-sec}

Next we present a polynomial whose coefficients provide upper
bounds for the number of possible limit cycles in the phase space
of $f$. Similar to the lower bound polynomial, this upper bound
polynomial is in terms of the strongly connected components of $f$
and their poset. However, this upper bound is not sharp. In fact,
we will show that it is not possible to give a sharp upper bound
that is in a polynomial form with constant coefficients. First we prove
the following set-theoretic equality which we need in this
section.
\begin{lemma}\label{lemma_sets}
Let $\{A_i \, : \, i \in \Delta\}$ be a finite collection of finite sets of limit cycles in $\s(h)$, $A=\bigcap_{i}{A_i}$,
and $B_i \subseteq A_i$. Then, for $K \subseteq \Delta$,
\[
\| \bigcap_{k\in K}{(A_k\setminus B_k)} \| =\sum_{L\subseteq
K}{(-1)^{\mid L\mid} \| A\cap \bigcap_{k\in L}{B_k}\|}.
\]
\end{lemma}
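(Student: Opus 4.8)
The plan is to treat $\|\cdot\|$ as a finitely additive measure on finite sets of limit cycles and then run a single inclusion--exclusion argument, exactly in the spirit of the Remark preceding the lemma. The only property of $\|\cdot\|$ I need is additivity on disjoint unions: if $\mathcal{A}$ and $\mathcal{B}$ are disjoint sets of limit cycles then $\|\mathcal{A}\uplus\mathcal{B}\|=\|\mathcal{A}\|+\|\mathcal{B}\|$, and consequently $\|\mathcal{A}\setminus\mathcal{B}\|=\|\mathcal{A}\|-\|\mathcal{A}\cap\mathcal{B}\|$ for any finite $\mathcal{B}$. This is immediate from the definition, since $\|\cdot\|$ merely tallies cycles by length, and it is precisely what legitimizes inclusion--exclusion for $\|\cdot\|$.

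First I would collapse the left-hand set to a single set difference. Since $B_k\subseteq A_k$ for every $k$, a direct membership check gives the De Morgan identity
\[
\bigcap_{k\in K}(A_k\setminus B_k)=\Bigl(\bigcap_{k\in K}A_k\Bigr)\setminus\Bigl(\bigcup_{k\in K}B_k\Bigr).
\]
Writing $A$ for the fixed intersection $\bigcap_{k\in K}A_k$ that appears here (it plays the role of the ambient set), a single application of additivity gives
\[
\Bigl\|\bigcap_{k\in K}(A_k\setminus B_k)\Bigr\|=\|A\|-\Bigl\|A\cap\bigcup_{k\in K}B_k\Bigr\|.
\]

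Next I would expand the subtracted term. Because intersection distributes over union, $A\cap\bigcup_{k\in K}B_k=\bigcup_{k\in K}(A\cap B_k)$, so inclusion--exclusion for $\|\cdot\|$ yields
\[
\Bigl\|A\cap\bigcup_{k\in K}B_k\Bigr\|=\sum_{\emptyset\neq L\subseteq K}(-1)^{|L|+1}\Bigl\|A\cap\bigcap_{k\in L}B_k\Bigr\|.
\]
Substituting this back and noting that the leading term $\|A\|$ is exactly the $L=\emptyset$ summand (under the convention that an empty intersection of the $B_k$ is the whole universe, so $A\cap\bigcap_{k\in\emptyset}B_k=A$), the outer minus sign flips $(-1)^{|L|+1}$ to $(-1)^{|L|}$ and the two contributions merge into one alternating sum $\sum_{L\subseteq K}(-1)^{|L|}\|A\cap\bigcap_{k\in L}B_k\|$, which is the claimed right-hand side.

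The steps are routine once the framework is fixed, so the work is entirely bookkeeping. The main point requiring care, such as it is, is twofold: first, confirming that $\|\cdot\|$ genuinely satisfies the additivity used (this is where disjointness of the cycle sets is what makes subtraction legitimate), and second, correctly absorbing the $L=\emptyset$ term and tracking the sign change induced by the outer minus, so that the final sum runs over \emph{all} $L\subseteq K$ with the stated signs rather than only the nonempty subsets.
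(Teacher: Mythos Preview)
Your proof is correct and follows essentially the same route as the paper: collapse $\bigcap_{k\in K}(A_k\setminus B_k)$ to $A\setminus\bigcup_{k\in K}(A\cap B_k)$ via De~Morgan, then apply inclusion--exclusion for the additive invariant $\|\cdot\|$. Your aside that $A$ must be read as $\bigcap_{k\in K}A_k$ (rather than the full $\bigcap_{i\in\Delta}A_i$) is exactly the identification the paper's first displayed equality tacitly relies on, and in the sole application of the lemma one has $K=\Delta$ anyway.
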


\begin{proof}
To begin with, observe that
\begin{align*}
\bigcap_{k\in K}{(A_k\setminus B_k)}  &=\bigcap_{k\in K}{(A\setminus B_k)}\\
&= A\setminus \bigcup_{k\in K}{(A \cap B_k)}.
\end{align*}
Then, using the inclusion-exclusion principle we obtain:
\begin{align*}
\|\bigcap_{k\in K}{(A_k\setminus B_k)}\|
&= \| A\setminus \bigcup_{k\in K}{(A \cap B_k)}\|\\
&=\sum_{L\subseteq K}{(-1)^{\mid L\mid}\| \bigcap_{k\in L}{(A\cap B_k)}\|}\\
&=\sum_{L\subseteq K}{(-1)^{\mid L\mid}\| A\cap \bigcap_{k\in L}{B_k}\|}.
\end{align*}
\end{proof}

\begin{definition}
A limit cycle $\cc$ in $\s(h)$ is called \emph{admissible} if, for
any $j \in [t]$,
\begin{enumerate}
\item If the coordinates of states in $\cc$ corresponding to
component $G_j$ are $0$, then $\cc$ is $(j^{\preceq})_0$; and
\item If the coordinates of states in $\cc$ corresponding to
component $G_j$ are $1$, then $\cc$ is $(j^{\succeq})_1$.
\end{enumerate}
\end{definition}

Notice that any limit cycle in the phase space of $f$ is admissible.
In particular, we have the following lemma.

\begin{lemma}
The number of admissible limit cycles in $\s(h)$ is an upper bound
for the number of limit cycles in $\s(f)$.
\end{lemma}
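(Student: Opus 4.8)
**The plan is to show that every limit cycle in $\s(f)$ is admissible, so that the admissible cycles in $\s(h)$ contain (a copy of) every cycle of $f$ and hence are at least as numerous.** The key observation is that $\s(f)$ and $\s(h)$ share the same state space $\ff_2^n$ and the same decomposition of coordinates into the blocks $G_1,\dots,G_t$; admissibility is a purely combinatorial condition on a limit cycle's coordinate pattern relative to the poset $\pp$, so it makes sense to ask it of a cycle of $f$ as well. The remark immediately preceding the statement already asserts that any limit cycle of $f$ is admissible, so the real content is to verify that claim and then observe the counting consequence.

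First I would verify that any limit cycle $\cc$ in $\s(f)$ satisfies the two admissibility conditions. Fix $j \in [t]$ and suppose the $G_j$-coordinates of the states in $\cc$ are identically $0$. I must show $\cc$ is $(j^{\preceq})_0$, i.e.\ that every coordinate lying in a component $G_k$ with $G_j \preceq G_k$ is also $0$ on $\cc$. This is exactly the propagation argument used in the proof of Theorem~\ref{theorem:f-h}: since $f$ is conjunctive, a vertex evaluates to $0$ as soon as one of its in-neighbors is $0$, and following directed paths upward through $\pp$ forces the $0$ to spread. Concretely, if $G_j \preceq G_k$ there is a directed path in $\dd(f)$ from $G_j$ to $G_k$; using \cite[Corollary 4.4]{CLP} to control path lengths modulo the loop numbers (as in Theorem~\ref{theorem:f-h}), one picks an iterate $f^N(\uu)=\uu$ along the cycle whose value at the chosen $G_k$-vertex is forced to $0$ by the $0$ at the $G_j$-vertex. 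The dual statement — that $1$'s in a $G_j$-coordinate force $1$'s on all of $j^{\succeq}$ — follows either by the same argument run with the AND operator reading $1$ downward, or immediately by the $\neg$-duality of Theorem~1.1 relating conjunctive and disjunctive networks.

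Once admissibility of every $f$-cycle is established, the counting step is immediate and is where I would finish. Each limit cycle $\cc$ of $f$ is a subset of $\ff_2^n$ that is forward-invariant under $f$; but by the defining local functions, the $G_i$-coordinates evolve under $f$ in a way that, \emph{on an admissible cycle}, decouples into the dynamics of the individual $h_i$ on their own blocks (the cross-block edges only ever confirm values that are already forced). Hence each $f$-cycle is in particular a limit cycle of $h$, and it is admissible as a cycle of $h$. Since distinct $f$-cycles are disjoint subsets of $\ff_2^n$ and thus distinct as $h$-cycles, this gives an injection from the limit cycles of $f$ into the admissible limit cycles of $\s(h)$, yielding the desired inequality on counts.

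**The main obstacle is the first step: rigorously pinning down the path-length/propagation argument so that a $0$ (or $1$) in one block is provably forced onto all of $j^{\preceq}$ (resp.\ $j^{\succeq}$) along a single periodic iterate.** The subtlety is coordinating the loop numbers $l_i$ of the intermediate components on a path from $G_j$ to $G_k$ so that some common iterate $f^N$ fixing the cycle simultaneously realizes a path of the right length into every target vertex; this is precisely the bookkeeping handled by \cite[Corollary 4.4]{CLP} and already carried out in miniature in the proof of Theorem~\ref{theorem:f-h}, so I would lean on that result rather than redo the number theory. The decoupling claim in the counting step, by contrast, I expect to be routine once admissibility is in hand, since admissibility is exactly the condition guaranteeing that the inter-component edges are dynamically inert along the cycle.
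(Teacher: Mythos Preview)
Your approach is essentially the paper's: the paper simply records the observation ``any limit cycle in the phase space of $f$ is admissible'' and states the lemma as an immediate consequence, relying implicitly on the already-proved Corollary~\ref{upperbound} (every $f$-cycle is an $h$-cycle) for the injection into cycles of $\s(h)$. Your propagation argument for admissibility, modeled on Theorem~\ref{theorem:f-h}, is exactly the right justification for that observation.

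One imprecision is worth flagging. In your counting step you write that ``admissibility is exactly the condition guaranteeing that the inter-component edges are dynamically inert along the cycle,'' and you use this to conclude that an admissible $f$-cycle is an $h$-cycle. That is not quite right: admissibility is a \emph{component-level} condition (it only constrains blocks that are identically $0$ or identically $1$), whereas decoupling requires the \emph{vertex-level} fact that whenever a cross-block in-neighbor is $0$, some in-block in-neighbor is already $0$. Admissibility does not imply this; indeed, the paper notes the upper bound is not sharp precisely because admissible $h$-cycles need not be $f$-cycles. What you actually need---and what the paper already has in hand---is Corollary~\ref{upperbound}, which is the iterated form of Theorem~\ref{theorem:f-h} and gives $f$-cycle $\Rightarrow$ $h$-cycle directly, independent of admissibility. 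So the two ingredients (decoupling and admissibility) are parallel consequences of the same propagation argument, not one derived from the other; once you cite Corollary~\ref{upperbound} for the first and your propagation argument for the second, the proof is complete.
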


For $J \subseteq [t]$, let $Z_{J}$ be the set of all $J_0$ limit
cycles and let $O_{J}$ be the set of all $J_1$ limit cycles.
Let $K,L \subseteq [t]$, then it is easy to see that$Z_K\cap Z_L=Z_{K\cup L}$,
$O_K\cap O_L=O_{K\cup L}$, and the cycle structure of $Z_K\cap O_L$ is
\[
\|Z_K\cap O_L\|= \langle K,L \rangle \prod_{j\in \overline{K\cup
L}}{\cc(h_j)},
\]
where
\[
\langle K,L\rangle=
\begin{cases}
0, & \text{if $K \cap L \neq \emptyset$},\\
1, & \text{if $K \cap L = \emptyset $}.
\end{cases}
\]

\begin{theorem}
The cycle structure of the admissible limit cycles in $\s(h)$ is
equal to
\[
\sum_{\substack{I\subseteq N\subseteq [t] \\
J\subseteq M\subseteq [t]}} {(-1)^{|N|+|M|+|I|+|J|}
 {\langle I_N,J^M\rangle \prod_{k\in
\overline{I_N\cup J^M}}{\cc(h_k)} }},
\]
where $I_N=I^{\succeq}\cup N,J^M=J^{\preceq}\cup M $.
\end{theorem}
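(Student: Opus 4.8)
The plan is to compute $\|\mathcal{A}\|$, where $\mathcal{A}$ is the set of admissible limit cycles of $\s(h)$, by writing $\mathcal{A}$ as the complement (inside the set of all limit cycles of $\s(h)$) of a union of ``violation'' sets and then sieving with inclusion--exclusion until every surviving term is one of the explicitly evaluable intersections $Z_K\cap O_L$. Concretely, a cycle fails the first admissibility condition at a component $j$ exactly when it lies in $P_j:=Z_{\{j\}}\setminus Z_{j^{\preceq}}$ (it is $0$ on $G_j$ but not on all of $j^{\preceq}$), and it fails the second condition at $j$ exactly when it lies in $Q_j:=O_{\{j\}}\setminus O_{j^{\succeq}}$. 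Thus $\mathcal{A}$ is the complement of $\bigcup_{j}P_j\cup\bigcup_{j}Q_j$, and the first step is to apply the inclusion--exclusion principle for $\|\cdot\|$, producing a signed sum of terms $\|\bigcap_{j\in N}P_j\cap\bigcap_{j\in M}Q_j\|$ indexed by the sets $N,M\subseteq[t]$ of invoked conditions, with sign $(-1)^{|N|+|M|}$ (the term $N=M=\emptyset$ being the whole universe $Z_{\emptyset}\cap O_{\emptyset}$).

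The second step resolves the set-differences hidden inside each $P_j$ and $Q_j$. Each has the shape $A_j\setminus B_j$ with $A_j=Z_{\{j\}}$, $B_j=Z_{j^{\preceq}}$ (respectively $A_j=O_{\{j\}}$, $B_j=O_{j^{\succeq}}$), and one checks $B_j\subseteq A_j$ since $j\in j^{\preceq}$ and $j\in j^{\succeq}$. Applying Lemma~\ref{lemma_sets} to the combined family indexed by $N\sqcup M$ turns $\|\bigcap_{N}P_j\cap\bigcap_{M}Q_j\|$ into $\sum_{I\subseteq N,\,J\subseteq M}(-1)^{|I|+|J|}\|Z_N\cap\bigcap_{j\in I}Z_{j^{\preceq}}\cap O_M\cap\bigcap_{j\in J}O_{j^{\succeq}}\|$. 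This is precisely where the inner index sets $I\subseteq N$ and $J\subseteq M$ and the extra sign $(-1)^{|I|+|J|}$ enter.

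The third step collapses each intersection using $Z_K\cap Z_L=Z_{K\cup L}$ and $O_K\cap O_L=O_{K\cup L}$ together with the definitions of the closures, $\bigcap_{j\in I}Z_{j^{\preceq}}=Z_{I^{\preceq}}$ and $\bigcap_{j\in J}O_{j^{\succeq}}=O_{J^{\succeq}}$, so that the summand becomes $\|Z_{N\cup I^{\preceq}}\cap O_{M\cup J^{\succeq}}\|$. Evaluating this by the displayed identity $\|Z_K\cap O_L\|=\langle K,L\rangle\prod_{k\in\overline{K\cup L}}\cc(h_k)$ produces a double sum of exactly the stated shape, with the factor $\langle\cdot,\cdot\rangle$ automatically killing the terms in which a single component is forced simultaneously to $0$ and to $1$. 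Reassembling the outer and inner signs gives the total sign $(-1)^{|N|+|M|+|I|+|J|}$, and the two poset-closure operators appear in the two slots $Z$ and $O$.

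The step I expect to be the main obstacle is the bookkeeping in the last two paragraphs: keeping the four nested indices $I\subseteq N$, $J\subseteq M$ straight, attaching each closure operator to the correct ($Z$ versus $O$) slot, and confirming the accumulated sign is exactly $(-1)^{|N|+|M|+|I|+|J|}$ and not an off-by-parity variant. A related subtlety is that the same value admits two superficially different but equal closed forms---the roles of $\preceq$ and $\succeq$ can be interchanged between the two slots without changing the sum, precisely because of the $\langle K,L\rangle$ coupling---so I would fix the convention to match the statement and then verify it. As a sanity check I would evaluate both the sieve above and the direct enumeration $\sum_{Z\text{ an up-set},\,O\text{ a down-set},\,Z\cap O=\emptyset}\ \prod_{k\in\overline{Z\cup O}}\bigl(\cc(h_k)-2\cc_1\bigr)$ on a two-element chain $G_1\prec G_2$, where both must return $\cc(h_1)\cc(h_2)-\cc(h_1)-\cc(h_2)+3\cc_1$.
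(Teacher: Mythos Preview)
Your proposal is correct and mirrors the paper's proof step for step: write the admissible set as the complement of the union of violation sets, apply inclusion--exclusion over $N,M\subseteq[t]$, invoke Lemma~\ref{lemma_sets} to unfold the set-differences into the inner sum over $I\subseteq N$, $J\subseteq M$, collapse via $Z_K\cap Z_L=Z_{K\cup L}$, $O_K\cap O_L=O_{K\cup L}$, and evaluate with the formula for $\|Z_K\cap O_L\|$. The only visible difference is the placement of the closure operators---the paper uses $Z_{\{i\}}\setminus Z_{i^{\succeq}}$ and $O_{\{j\}}\setminus O_{j^{\preceq}}$ (whence $I_N=I^{\succeq}\cup N$, $J^M=J^{\preceq}\cup M$), while you attach $\preceq$ to the $Z$-slot and $\succeq$ to the $O$-slot in accordance with the stated definition of admissibility; your remark that the two closed forms coincide is correct, and the substitution $(I,N)\leftrightarrow(J,M)$ together with the symmetry of $\langle\cdot,\cdot\rangle$ and of $\overline{K\cup L}$ exhibits the equality term by term.
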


\begin{proof}
By definition, it follows that the set of admissible limit cycles is

\begin{equation*}
Adm :=\{\textrm{limit cycles in $\s(h)$}\} \backslash \bigcup_{i,j
\in [t]}{ (Z_{\{i\}} \backslash Z_{ \{i\}^{\succeq} }) \cup
(O_{\{j\}} \backslash O_{ \{j\}^{\preceq} })  }.
\end{equation*}

Using the inclusion-exclusion principle, it follows that

\begin{equation}\label{inc_exc}
\|Adm\| =
\prod_{k\in [t]}{\cc(h_k)}+\sum_{ \substack{ N\subseteq [t] \\
M\subseteq [t]\\N\cup M\neq \emptyset}} { (-1)^{|N|+|M|}
\|\bigcap_{i\in N}{ (Z_{\{i\}} \backslash
Z_{\{i\}^{\succeq} }) } \cap \bigcap_{j\in M} {(O_{\{j\}}
\backslash O_{ \{j\}^{\preceq} }) } \| }.
\end{equation}

On the other hand, since $\bigcap_{i\in N}{ Z_{\{i\}}}=Z_N$, \
$\bigcap_{j\in M}{ O_{\{j\}}}=O_M$, \ $Z_{i}\subseteq
Z_{\{i\}^{\succeq}}$ and $O_{j}\subseteq O_{\{j\}^{\preceq}}$ it
follows from Lemma \ref{lemma_sets} that

\begin{eqnarray*}
\|  \bigcap_{i\in N}{ (Z_{\{i\}} \backslash Z_{\{i\}^{\succeq} }) }
&\cap&  \bigcap_{j\in M} {O_{\{j\}}\backslash O_{ \{j\}^{\preceq} }) }\| \\
 &=& \sum_{\substack{I\subseteq N\\J \subseteq M}}
   {(-1)^{|I|+|J|}\| (Z_N\cap O_M)\cap
(   \bigcap_{i\in N}{Z_{\{i\}^{\succeq}}}\cap
    \bigcap_{j\in M}{O_{\{j\}^{\preceq}}})
    \| } \\
&=& \sum_{\substack{I\subseteq N\\J \subseteq M}}
   {(-1)^{|I|+|J|}\| (Z_N\cap O_M)\cap
(   Z_{\bigcup_{i\in N}\{i\}^{\succeq}} \cap
    O_{\bigcup_{j\in M}\{j\}^{\preceq}})
    \| } \\
&=& \sum_{\substack{I\subseteq N\\J \subseteq M}}{(-1)^{|I|+|J|} \|
Z_N \cap O_M\cap Z_{I^{\succeq}} \cap O_{J^{\preceq}} \| } \notag  \\
&=& \sum_{\substack{I\subseteq N\\J \subseteq M}}{(-1)^{|I|+|J|} \|
Z_{N \cup I^{\succeq}} \cap O_{M \cup J^{\preceq}} \| } \\
&=&  \sum_{\substack{I\subseteq N\\J \subseteq M}}{(-1)^{|I|+|J|} \|
Z_{I_N} \cap O_{J^M} \| } \\
&=& \sum_{\substack{I\subseteq N\\J
\subseteq M}}{(-1)^{|I|+|J|} {\langle I_N,J^M\rangle \prod_{k\in
\overline{I_N\cup J^M}}{\cc(h_k)} } }.
\end{eqnarray*}
Equation (\ref{inc_exc}) then becomes
\begin{align*}
\|Adm\|&=\prod_{k\in [t]}{\cc(h_k)}+
\sum_{ \substack{ N\subseteq [t] \\
M\subseteq [t]\\N\cup M\neq \oslash}} { (-1)^{|N|+|M|}
\sum_{\substack{I\subseteq N\\J \subseteq M}}{(-1)^{|I|+|J|}
{\langle I_N,J^M\rangle \prod_{k\in \overline{I_N\cup
J^M}}{\cc(h_k)} }}
 }\\
 &=\prod_{k\in [t]}{\cc(h_k)}+
\sum_{\substack{ I\subseteq N\subseteq [t] \\
J\subseteq M\subseteq [t]\\N\cup M\neq
\oslash}}{(-1)^{|N|+|M|+|I|+|J|} {\langle I_N,J^M\rangle
\prod_{k\in \overline{I_N\cup J^M}}{\cc(h_k)} }}
 \\
&=\sum_{\substack{I\subseteq N\subseteq [t] \\
J\subseteq M\subseteq [t]}} {(-1)^{|N|+|M|+|I|+|J|}
 {\langle I_N,J^M\rangle \prod_{k\in
\overline{I_N\cup J^M}}{\cc(h_k)} }}.
\end{align*}

\end{proof}

\begin{corollary}
For $K,L \subseteq [t]$, let
\[
\phi_{(K,L )}(x_1,\dots,x_t) = \left\{%
            \begin{array}{ll}
            0, & \mbox{ if } K \cap L \neq \emptyset, \\
             x_{\overline{K\cup L}}, & \mbox{ if } K \cap L = \emptyset. \\
            \end{array}%
            \right.
\]
Consider the function
\begin{equation}\label{upper-poly}
\mathcal{U}(x_1,\dots,x_t) = \sum_{I \subseteq N, \, J \subseteq
M} (-1)^{|N|+|M|+|I|+|J|} \phi_{(I_N,J^M)}(x_1,\dots,x_t)
\end{equation}
then
\begin{equation}
\cc(f)\leq \mathcal{U}(\cc(h_1),\dots,\cc(h_t)),\label{upper-ineq}
\end{equation}
which provides therefore an upper bound for the cycle structure of
$f$.
\end{corollary}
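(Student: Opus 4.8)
The plan is to observe that this corollary is essentially a restatement of the theorem immediately preceding it, repackaged so that the bound appears as a polynomial in the formal variables $x_1,\dots,x_t$ into which the cycle structures $\cc(h_i)$ are substituted. First I would fix the convention that all products and substitutions below are carried out using the cycle ``multiplication'' $\cc_s\cdot\cc_t=\gcd\{s,t\}\,\cc_{\lcm\{s,t\}}$ from Corollary~\ref{cross-prod-cycles}, exactly as the statement prescribes. Then I would verify that $\phi_{(K,L)}$ is precisely the bookkeeping device reproducing the summand $\langle K,L\rangle\prod_{k\in\overline{K\cup L}}\cc(h_k)$ that appears in the previous theorem. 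Indeed, when $K\cap L=\emptyset$ we have $\langle K,L\rangle=1$ and, by the definition $x_J=\prod_{j\in J}x_j$, the equality $x_{\overline{K\cup L}}=\prod_{k\in\overline{K\cup L}}x_k$ holds, so $\phi_{(K,L)}(\cc(h_1),\dots,\cc(h_t))$ agrees with that term; and when $K\cap L\neq\emptyset$ both $\langle K,L\rangle$ and $\phi_{(K,L)}$ vanish. Applying this termwise with $K=I_N$ and $L=J^M$ across the full sum yields the identity $\mathcal{U}(\cc(h_1),\dots,\cc(h_t))=\|Adm\|$, where $\|Adm\|$ denotes the cycle structure of the admissible limit cycles in $\s(h)$ as computed in that theorem.

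Second, I would invoke the lemma (stated just before the theorem) asserting that the number of admissible limit cycles in $\s(h)$ is an upper bound for the number of limit cycles in $\s(f)$, together with the observation that every limit cycle of $f$ is admissible. Since $\cc(f)$ and $\|Adm\|$ are generating functions recording these counts by length, this domination is exactly the componentwise inequality $\cc(f)\leq\|Adm\|$. Combining it with the identity from the first step gives $\cc(f)\leq\mathcal{U}(\cc(h_1),\dots,\cc(h_t))$, which is the assertion.

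The argument is essentially formal, so there is no deep obstacle; the only point demanding genuine care is the dictionary between the set-theoretic quantity $\|Adm\|$ and the polynomial $\mathcal{U}$. Concretely, I would check that substituting $x_i\mapsto\cc(h_i)$ into the monomial $x_{\overline{K\cup L}}$ and expanding via the $\gcd/\lcm$ multiplication reproduces $\prod_{k\in\overline{K\cup L}}\cc(h_k)$ as evaluated in the theorem, and that the signs $(-1)^{|N|+|M|+|I|+|J|}$ and the index set $\{I\subseteq N\subseteq[t],\,J\subseteq M\subseteq[t]\}$ coincide on both sides. Once this correspondence is in place the corollary follows at once. I would also remark, consistently with the surrounding discussion, that no sharpness is claimed here, since a polynomial upper bound with constant coefficients cannot be sharp in general.
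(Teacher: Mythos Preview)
Your proposal is correct and matches the paper's approach: the corollary is stated without proof because it is an immediate repackaging of the preceding theorem, and your argument makes explicit exactly that dictionary---identifying $\phi_{(K,L)}(\cc(h_1),\dots,\cc(h_t))$ with $\langle K,L\rangle\prod_{k\in\overline{K\cup L}}\cc(h_k)$ termwise, so that $\mathcal{U}(\cc(h_1),\dots,\cc(h_t))=\|Adm\|$, and then invoking the lemma that every limit cycle of $f$ is admissible.
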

This upper bound is clearly not sharp, since it is easy to find admissible limit
cycles in the phase space of $h$ that are not cycles in the phase space of $f$.
The advantage of this upper bound, however, is that it is a polynomial in terms of
the strongly connected components and their poset. Furthermore, we will show next
that this bound gives the exact number of fixed points.

\begin{theorem}
Let $f$ be a conjunctive Boolean network. Then any admissible fixed point is regular
and hence the number of fixed points $C(f)_0$ in the phase space of $f$ is
\begin{equation}
  C(f)_0 = \sum_{\jj \subseteq \Omega} (-1)^{|\jj|+1} \, 2^{|\bigcap_{J \in \jj} J|}.  \label{fp-exact}
\end{equation}
\end{theorem}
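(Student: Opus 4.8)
The plan is to reduce the statement to a purely order-theoretic fact about the poset $\mathcal{P}$ of strongly connected components, and then read the count off from the lower-bound formula already established. First I would record that a fixed point of $f$ is the same thing as a fixed point of $h$ that is admissible: by Theorem \ref{theorem:f-h} applied repeatedly every cycle of $f$ is a cycle of $h$, and every cycle of $f$ is admissible, while conversely every regular cycle of $h$ is a cycle of $f$. Since each $h_j$ is strongly connected, the cycle-structure theorem of Section \ref{stronglyconnected} gives that its only fixed points are the all-$0$ and all-$1$ states; hence a fixed point of $h$ assigns to each component $G_j$ either $0$ or $1$, and is therefore encoded by a partition $[t]=J_0 \sqcup J_1$ with $J_0=\{j : G_j \text{ is } 0\}$ and $J_1=\{j : G_j \text{ is } 1\}$. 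Unwinding the definition of admissibility, its two clauses say exactly that $J_0$ is an up-set and $J_1$ is a down-set in $\mathcal{P}$ (the two conditions are equivalent, the sets being complementary).

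The heart of the argument, and the step I expect to be the main obstacle, is to show that every such admissible fixed point is regular, i.e.\ that for any up-set/down-set partition $J_0\sqcup J_1$ there is a maximal antichain $J\in\Omega$ with $J^{\prec}\subseteq J_0$ and $J^{\succ}\subseteq J_1$. I would construct $J$ explicitly as
\[
J := \mathrm{Max}(J_1)\,\cup\,\{\,x\in\mathrm{Min}(J_0) : x \text{ is incomparable to every element of } \mathrm{Max}(J_1)\,\},
\]
where $\mathrm{Max}(J_1)$ are the maximal elements of the down-set $J_1$ and $\mathrm{Min}(J_0)$ the minimal elements of the up-set $J_0$. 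Each piece is an antichain and the two pieces are mutually incomparable by construction, so $J$ is an antichain. For the separation property: anything strictly above a maximal element of $J_1$ leaves the down-set $J_1$, hence lies in $J_0$, and anything above an element of $\mathrm{Min}(J_0)\subseteq J_0$ stays in the up-set, giving $J^{\prec}\subseteq J_0$; dually anything strictly below $J_1$ stays in $J_1$, and anything strictly below a minimal element of $J_0$ leaves $J_0$, giving $J^{\succ}\subseteq J_1$. Maximality is the one point needing care: given $k\in[t]$, if $k\in J_1$ it lies below some element of $\mathrm{Max}(J_1)$, and if $k\in J_0$ it lies above some $x\in\mathrm{Min}(J_0)$, where either $x\in J$ or $x$ is comparable to some $m\in\mathrm{Max}(J_1)$; in the latter case $x<m$ is impossible (it would force $x\in J_1$), so $m<x\le k$ and $k$ is comparable to $m\in J$. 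Thus every element is comparable to a member of $J$, so $J$ is a maximal antichain and the fixed point is $J$-regular.

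Combining the two directions, the fixed points of $f$ are exactly the regular fixed points of $h$. To finish I would extract the coefficient of $\cc_1$ from the regular cycle-structure polynomial
\[
\sum_{\jj\subseteq\Omega}(-1)^{|\jj|+1}\prod_{j\in\bigcap_{J\in\jj}J}\cc(h_j)
\]
obtained in the lower-bound lemma. Under the multiplication $\cc_s\cdot\cc_t=\gcd\{s,t\}\,\cc_{\lcm\{s,t\}}$ of Corollary \ref{cross-prod-cycles}, the number of length-$1$ cycles in a product $\prod_{j\in S}\cc(h_j)$ is the product of the numbers of fixed points of the factors, and each strongly connected $h_j$ has exactly two fixed points; hence the $\cc_1$-coefficient of $\prod_{j\in S}\cc(h_j)$ equals $2^{|S|}$. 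Applying this with $S=\bigcap_{J\in\jj}J$ turns the $\cc_1$-coefficient of the whole sum into $\sum_{\jj\subseteq\Omega}(-1)^{|\jj|+1}2^{|\bigcap_{J\in\jj}J|}$, which is precisely the claimed value of $C(f)_0$.
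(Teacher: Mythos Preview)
Your proof is correct and follows the same overall architecture as the paper: establish that admissible fixed points are regular, and then read the count off from the lower-bound polynomial $\mathcal{L}$ evaluated at $\cc(h_j)=2\cc_1$. The paper's own proof simply asserts that ``it follows from the definitions that any admissible fixed point is regular'' and then performs the same substitution into the formula (\ref{lower-poly}) that you carry out in your last paragraph.

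The one substantive difference is that you actually \emph{prove} the order-theoretic step the paper waves at: given an up-set/down-set partition $J_0\sqcup J_1$ of $\mathcal{P}$, you explicitly construct a maximal antichain $J$ witnessing regularity, and you verify antichain, maximality, $J^{\prec}\subseteq J_0$, and $J^{\succ}\subseteq J_1$ case by case. This is a genuine addition of content over the paper's proof, and your construction and verification are sound (including the boundary cases $J_0=\emptyset$ or $J_1=\emptyset$). The final extraction of the $\cc_1$-coefficient via $\prod_{j\in S}\cc(h_j)\mapsto 2^{|S|}\cc_1$ is identical to the paper's computation.
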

\begin{proof}
It follows from the definitions that any admissible fixed point is regular, and hence
$\mathcal{U}(2\cc_1,\dots,2\cc_1) = \mathcal{L}(2\cc_1,\dots,2\cc_1)$. In particular, the
number of fixed points $C(f)_0$ is the coefficient of $\cc_1$ in  $\mathcal{L}(2\cc_1,\dots,2\cc_1)$.
By Equation (\ref{lower-poly}), we get
\begin{eqnarray*}
  \mathcal{L}(2\cc_1,\dots,2\cc_1)
   &=& \sum_{\jj \subseteq \Omega} (-1)^{|\jj|+1} \prod_{j\in \bigcap_{J \in \jj} J} 2\cc_1 \\
   &=& \sum_{\jj \subseteq \Omega} (-1)^{|\jj|+1} \,  2^{|\bigcap_{J \in \jj} J|} \cc_1.
\end{eqnarray*}
\end{proof}

\subsection{An example}

Each system $g$ from Corollary \ref{upperbound-g} has
the same poset as $f$ and the cycle structure of each $g$ is an
upper bound for the cycle structure of $f$. However, using the
poset alone, or using the cycle structure of the strongly
connected components alone, one can not expect to find a
polynomial form with constant coefficients that would provide a
sharp upper bound, as we now show.

Let $\pp$ be the poset on two nodes $G_1$ and $G_2$ where $G_1
\preceq G_2$. Let $f$ be a conjunctive Boolean network with a
dependency graph that has only two strongly connected components
which are connected by one edge. Let $F$ be the set of all such
systems. We will show that there is no polynomial $W(x_1,x_2) =
\sum_{i,j} a_{ij}x_1^ix_2^j$ such that, for all $f \in F$,
 $W(\cc(h_1),\cc(h_2)) = \cc(f)$, where $h_1$ and $h_2$ are the
 conjunctive Boolean networks
corresponding to the two strongly connected components of $f$.

Suppose the loop number of $h_1$ is 1 and that of $h_2$ is $q$
where $q$ is prime. Then $\cc(h_1) = 2\cc_1$ and $\cc(h_2) =
2\cc_1 + \dfrac{2^q-2}{q}\cc_q$. It is easy to check that $\cc(f)
= 3\cc_1 +\dfrac{2^q-2}{q}\cc_q$. Since $\cc(f) =
W(\cc(h_1),\cc(h_2))$, we have

\begin{eqnarray*}
\frac{2^q-2}{q}\cc_q+3\cc_1 &=& W(2\cc_1,2\cc_1+\frac{2^q-2}{q}\cc_q) \\
&=& \sum_{i,j}{a_{ij}(2\cc_1)^i(2\cc_1+\frac{2^q-2}{q}\cc_q)^j} \\
&=& \sum_{i,j} 2^i a_{ij} (2\cc_1+\frac{2^q-2}{q}\cc_q)^j \\
&=& \sum_{i,j} 2^i a_{ij} [2^j\cc_1 + [(2+\frac{2^q-2}{q})^j-2^j]\cc_q] \\
&=& \sum_{i,j} 2^{i+j} a_{ij} \cc_1 + \sum_{i,j} 2^i
a_{ij}[(2+\frac{2^q-2}{q})^j-2^j]\cc_q.
\end{eqnarray*}

Equating coefficients,  for any $q$, we have
$\frac{2^q-2}{q} = \sum_{i,j} 2^i a_{ij}[(2+\frac{2^q-2}{q})^j-2^j]$.
Therefore, $a_{ij} =0$ for all $i,j \geq 1$ and hence $W(x_1,x_2)$ must be
of the form $W(x_1,x_2) = a_{11}x_1x_2+ a_{10}x_1+a_{01} x_2- a_{00}$.

Now suppose the loop number of $h_1$ is $p$ and that of $h_2$ is
$q$. Then any cycle in the phase space of $f$ is regular and hence
it is easy to check that $\cc(f) = \cc(h_1)+\cc(h_2)-\cc_1$. In
particular, $a_{11}=0$ and hence $W(x_1,x_2) = a_{10}x_1+a_{01}
x_2- a_{00}$.

However, for the case when the loop number of $h_1$ is $4$ and that of $h_2$ is $6$,
we have $\cc(f) = 3\cc_1+3\cc_2+2\cc_3+ 4\cc_4+11\cc_6+2\cc_{12}$, \,
$\cc(h_1) = 2\cc_1+\cc_2+3\cc_4$ and $\cc(h_2) = 2\cc_1+\cc_2+2\cc_3+9\cc_6$.
In particular, $\cc(f) \neq W(\cc(h_1),\cc(h_2))$.

\section{Discussion}
In this paper we have focused on the class of conjunctive  and disjunctive Boolean
networks and have treated the problem of predicting network
dynamics from the network topology.  Such networks are entirely determined
by the topology of their dependency graphs, so it should be
possible in principle to extract all information about the
dynamics from this topology. This problem has been the subject of many
research articles and been solved for the class of XOR Boolean networks, that is,
networks where all nodes use the XOR Boolean operator.
Determining the dynamics of a network from its topology has
practical importance, for instance in the study of
dynamical processes on social networks, such as the spread of an
infectious disease. Public health interventions, such as
quarantine of selected individuals or closure of certain
institutions or means of travel, often attempt to alter network
dynamics by altering network topology.  The current study can be
seen as a theoretical study that begins to elucidate the role of
certain features of the network topology in supporting certain
types of dynamics. Another example is discussed in \cite{AO},
where it is shown that the dynamics of a certain gene regulatory
network in the fruit fly \emph{Drosophila melanogaster} is
determined by the topology of the network.

We have shown that if the dependency graph of the network is
strongly connected, then the key topological determinant of
dynamics is the loop number, or index of imprimitivity, of the
graph, and one can describe the cycle structure exactly. The
approach here is to look at powers of the incidence matrix of the
graph in the max-plus algebra.  In light of Theorem
\ref{Th:strongly_connected} one could argue that this is the
prevalent case for large networks. What is more, this theorem
shows that large networks tend to have loop number one, so that
their dynamics is extraordinarily simple, consisting only of two
fixed points.

For general dependency graphs, we give lower and upper bounds on
the cycle structure in the form of a polynomial function
determined by structural measures of the graph.  The lower bound
is sharp.

In some sense, the main contribution of this paper lies in the
identification of a class of networks that provide an excellent
case study of the relationship between network topology and
dynamics, which can provide hypotheses for more general classes of
networks.


\end{document}